\newtheorem{thm}{Theorem}[section]
\newtheorem{cor}[thm]{Corollary}
\newtheorem{prop}[thm]{Proposition}
\newtheorem{thm*}{Theorem}
\theoremstyle{definition}
\newtheorem{definition}[thm]{Definition}
\theoremstyle{remark}
\newtheorem{remark}[thm]{Remark}
\numberwithin{equation}{section}
\def\mrm#1{{\mathrm{#1}}}
\newcommand{\brat}[1]{{\left< #1 \right>}}
\newcommand{\R}{{\mathbb{R}}}
\newcommand{\Z}{{\mathbb{Z}}}
\newcommand{\C}{{\mathbb{C}}}
\newcommand{\Q}{{\mathbb{Q}}}
\newcommand{\D}{{\mathbb{D}}}
\newcommand{\bK}{{\mathbb{K}}}
\newcommand{\ra}{\rightarrow}
\newcommand{\del}{\partial}
\newcommand\sign{\operatorname{sign}}
\newcommand{\cL}{\mathcal{L}}
\newcommand{\til}[1]{\widetilde{#1}}
\newcommand{\ol}[1]{\overline{#1}}
\newcommand{\om}{\omega}
\newcommand{\al}{\alpha}
\newcommand{\la}{\lambda}
\newcommand{\ga}{\gamma}
\newcommand{\Ga}{\Gamma}
\newcommand{\cA}{\mathcal{A}}
\newcommand{\cI}{\mathcal{I}}
\newcommand{\cP}{\mathcal{P}}
\newcommand{\rT}{\mathrm{T}}
\newcommand{\rS}{\mathrm{S}}
\newcommand{\fix}{\mathrm{Fix}}
\newcommand{\crit}{\mathrm{Crit}}
\DeclareMathOperator{\rank}{\mathrm{rank}}
\DeclareMathOperator{\Ham}{\mathrm{Ham}}
\DeclareMathOperator{\Symp}{\mathrm{Symp}}
\DeclareMathOperator{\im}{\mathrm{im}}
\DeclareMathOperator{\id}{\mathrm{id}}
\DeclareMathOperator{\cn}{\mathrm{CN}}
\DeclareMathOperator{\hn}{\mathrm{HN}}
\DeclareMathOperator{\cfn}{\mathrm{CFN}}
\DeclareMathOperator{\hfn}{\mathrm{HFN}}
\DeclareMathOperator{\cf}{\mathrm{CF}}
\DeclareMathOperator{\hf}{\mathrm{HF}}
\DeclareMathOperator{\h}{\mathrm{H}}
\DeclareMathOperator{\flux}{\mathrm{Flux}}
\DeclareMathOperator{\wflux}{\mathrm{\widetilde{Flux}}}
\DeclareMathOperator{\cz}{\mathrm{CZ}}
\begin{document}

\title{Remarks on symplectic circle actions, torsion and loops}


\author{Marcelo S. Atallah}

%
%


\begin{abstract}
We study loops of symplectic diffeomorphisms of closed symplectic manifolds. Our main result, which is valid for a large class of symplectic manifolds, shows that the flux of a symplectic loop vanishes whenever its orbits are contractible. As a consequence, we obtain a new vanishing result for the flux group and new instances where the presence of a fixed point of a symplectic circle action is a sufficient condition for it to be Hamiltonian. We also obtain applications to symplectic torsion, more precisely, non-trivial elements of $\Symp_{0}(M,\om)$ that have finite order.
\end{abstract}

\maketitle

\section{Introduction}
\subsection{The Flux group and the $e$-homomorphism}\label{sec: e-homomorphism}
Let $(M,\om)$ be a closed symplectic manifold and denote by $\til\Symp_{0}(M,\om)$ the universal cover of the identity component $\Symp_{0}(M,\om)$ of the group of symplectic diffeomorphisms. The \textit{flux homomorphism}
\begin{equation*}
	\wflux:\til\Symp_{0}(M,\om)\ra\h^{1}(M;\R)
\end{equation*}
 is defined by assigning to each class $\til\psi\in\til\Symp_{0}(M,\om)$ a cohomology class
 \begin{equation*}
 	\wflux(\til\psi)=\int^{1}_{0}[\iota_{X^{t}}\om]dt,
 \end{equation*}
where $X^{t}$ is the time-dependent vector field induced by a symplectic path $\{\psi_{t}\}$ representing $\til\psi$. In particular, if $\ga$ is a $1$-cycle in $M$, we have that
\begin{equation}\label{eq: Flux_evaluation}
	\brat{\wflux(\{\psi_{t}\}),[\ga]}=\int_{[0,1]\times\R/\Z}\al^{*}\om,
\end{equation}
where $\al:[0,1]\times\R/\Z\rightarrow M$ is given by setting $\al(t,s)=\psi_{t}(\ga(s))$. We shall often denote by $\wflux(\{\psi_{t}\})$ the flux of $\til\psi$. The \textit{flux group} $\Ga$ is the image of the restriction of the flux homomorphism to $\pi_{1}(\Symp_{0}(M,\om))\subset\til\Symp_{0}(M,\om)$. The $\wflux$ map descends to a homomorphism
 \begin{equation*}
 	\flux:\Symp_{0}(M,\om)\ra\h^{1}(M;\R)/\Ga,
 \end{equation*}
whose kernel was shown by Banyaga \cite{Banyaga} to be equal to $\Ham(M,\om)$. In particular, we have the exact sequence
\begin{equation*}
1\ra\Ham(M,\om)\ra\Symp_{0}(M,\om)\ra\h^{1}(M;\R)/\Ga\ra1,
\end{equation*}
which implies that $\Ham(M,\om)$ coincides with $\Symp_{0}(M,\om)$ if and only if $\h^{1}(M;\R)$ vanishes. For a base point $x_{0}\in M$, denote by
\begin{equation*}
	ev:\pi_{1}(\Symp_{0}(M,\om),\id)\ra\pi_{1}(M,x_{0})
\end{equation*}
the \textit{evaluation homomorphism} given by setting $ev([\{\psi_{t}\}])=[\{\psi_{t}(x_{0})\}]$. The image of $ev$ lies in the center of $\pi_{1}(M)$ (see \cite{Polterovich2006floer}). Therefore, since $M$ is connected, the evaluation maps for different choices of base points $x_{0}\in M$ are identified. Hence, we denote $ev$ without making specific reference to $x_{0}$. Observe that the evaluation map factors through the flux group $\Ga$ yielding the following commutative diagram:\footnote{The fact that $ev$ factors through the flux group was pointed out to me by Egor Shelukhin.}
\begin{equation}\label{dgm: evaluation_flux}
    \begin{tikzcd}
        \pi_{1}(\Symp_0(M,\om)) \arrow[d, "\wflux"] \arrow[rd, "ev"] \\
        \Ga\arrow[r,"e"] & \pi_{1}(M).
    \end{tikzcd}
\end{equation}
To see that the \textit{$e$-homomorphism} is well-defined note that if \[\wflux(\{\phi_{t}\})=\wflux(\{\psi_{t}\}),\] then the loop $\{\phi_{t}\circ\psi_{t}^{-1}\}$ of symplectic diffeomorphisms can be homotoped to a Hamiltonian loop, which is known to have contractible orbits (see \cite{McDuff2004survey}). An important consequence of diagram (\ref{dgm: evaluation_flux}) is that whenever $e$ is injective a symplectic loop with contractible orbits can be homotoped to a Hamiltonian loop. It is not hard to see that this is the case for $(M,\om)$ symplectically aspherical: $[\om]$ vanishes on the image $\h^{S}_{2}(M;\Z)$ of the Hurewicz map $\pi_{2}(M)\rightarrow \h_{2}(M;\Z)$. Indeed, if a symplectic loop $\{\psi_{t}\}$ has trivial evaluation and $\gamma$ is any $1$-cylcle in $M$, then the torus $\al(\rT^{2})$, where $\al(t,s)=\psi_{t}(\gamma(s))$, will have the symplectic area of a sphere. Equation (\ref{eq: Flux_evaluation}) then implies that the flux of the loop $\{\psi_{t}\}$ vanishes.\\ 

\noindent The injectivity of the $e$-homomorphism has important consequences in the theory of symplectic circle actions discussed in Section \ref{sec: Symplectic_circle_actions}. Furthermore, it allows one to deduce the vanishing of the Flux group in cases where the evaluation map is trivial. In particular, we have the following:
\begin{prop}\label{prop: vanishing_of_flux_group}
Let $(M,\om)$ be a closed symplectic manifold with injective $e$-homomorphism, further suppose that either $\chi(M)\neq0$ or that $\pi_{1}(M)$ has finite center. Then the flux group $\Ga$ is trivial.
\end{prop}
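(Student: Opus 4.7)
My plan is to use the commutative diagram~\eqref{dgm: evaluation_flux} to identify $\Ga$ with the image of the evaluation map $ev$, and then to argue that under either of the two hypotheses this image is necessarily a torsion group; combined with the torsion-freeness of $\Ga$ as a subgroup of $\h^{1}(M;\R)$, this will force $\Ga$ to vanish.

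First, I would observe that by the very definition of $\Ga$ the restriction of $\wflux$ to $\pi_{1}(\Symp_{0}(M,\om))$ is a surjection onto $\Ga$. Combined with the identity $ev = e\circ\wflux\big|_{\pi_{1}}$ read off from diagram~\eqref{dgm: evaluation_flux}, this yields $\im(ev) = e(\Ga)$, and the injectivity hypothesis on $e$ upgrades this to an isomorphism
\[
e|_{\Ga}:\Ga\xrightarrow{\sim}\im(ev).
\]
The whole proposition therefore reduces to showing that $\im(ev)$ is a torsion group in each of the two cases, because $\Ga\subset\h^{1}(M;\R)$ is automatically torsion-free.

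The case in which $\pi_{1}(M)$ has finite center is then immediate: as already noted in Section~\ref{sec: e-homomorphism}, $\im(ev)$ lies in $Z(\pi_{1}(M))$, so it is finite and a fortiori torsion. In the case $\chi(M)\neq 0$ I would appeal to a classical theorem of Gottlieb, to the effect that for a finite CW complex $X$ with $\chi(X)\neq 0$ the Gottlieb subgroup $G_{1}(X)\subset Z(\pi_{1}(X))$ vanishes---or, in a weaker form that still suffices here, is annihilated by $\chi(X)$ and hence is torsion. Since every loop $\{\psi_{t}\}$ in $\Symp_{0}(M,\om)$ is in particular a based loop in the space of self-maps of $M$ at $\id_{M}$, the image of $ev$ sits inside $G_{1}(M)$, and the required torsion statement follows. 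In either case $\im(ev)$ is simultaneously torsion and torsion-free, hence $\Ga=0$.

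The main obstacle is essentially bookkeeping: one needs to pin down the precise version of Gottlieb's theorem to be cited and verify that the natural map $\Symp_{0}(M,\om)\ra\mathrm{map}(M,M)$ is continuous enough to induce the expected map on $\pi_{1}$ so that $\im(ev)$ really does land in $G_{1}(M)$. Both points are formal, so I do not anticipate any serious analytic difficulty beyond these references.
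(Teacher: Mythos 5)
Your proposal is correct and follows essentially the same route as the paper: both reduce the claim to the fact that $\im(ev)$ is torsion (finite when $Z(\pi_{1}(M))$ is finite, trivial when $\chi(M)\neq0$) and then combine the injectivity of $e$ with the torsion-freeness of $\Ga\subset\h^{1}(M;\R)$. The appeal to Gottlieb's theorem in the $\chi(M)\neq0$ case is exactly the content of the fact the paper cites from Lê--Ono, so the difference is only one of packaging.
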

\begin{proof}
When $\chi(M)\neq0$ we have that $ev$ is trivial, a fact that is true even in the more general setting of loops of diffeomorphisms on a closed manifold, see \cite{Le-Ono_Ukraine}. Therefore, $\Gamma$ is trivial by the injectivity of $e$. Now, suppose that $\pi_{1}(M)$ has finite center and let $\{\psi_{t}\}$ be a symplectic loop. Then, there exists some positive integer $k$ such that $ev(\{\psi_{t}\})^{k}=1$. Thus, we have the following:
	\begin{equation*}
		e(\wflux(\{\psi^{k}_{t}\})) = ev(\{\psi^{k}_{t}\})=ev(\{\psi_{t}\})^{k}=1.
	\end{equation*}
We conclude, by the injectivity of $e$ and the fact that $\wflux$ is a homomorphism to a torsion free group, that the loop $\{\psi_{t}\}$ has no flux.
\end{proof}

\noindent Our main result provides more sufficient conditions for the injectivity of the $e$-homomorphism. 
\pagebreak
\begin{definition}
Let $(M,\om)$ be a closed symplectic manifold of dimension $2n$. We say that it satisfies condition $(\bigstar)$ if at least one of the following is true:
\begin{itemize}
	\item \textit{Symplectically aspherical}: The cohomology class $[\om]$ vanishes on the image $\h^{S}_{2}(M;\Z)$ of the Hurewicz map $\pi_{2}(M)\rightarrow \h_{2}(M;\Z)$.
	\item \textit{Spherically monotone}: There exists a constant $\la\in\R\setminus\{0\}$ such that \[[\om]|_{\pi_{2}(M)}=\lambda\cdot c_{1}(M)|_{\pi_{2}(M)},\] where $c_{1}(M)$ denotes the first Chern class associated with $(M,\om)$. We say \textit{positive (resp. negative) spherically monotone} when $\lambda>0$ (resp. $\lambda<0$).
	\item \textit{Weak Lefschetz property:} The multiplication map \[[\om]^{n-1}:\h^{1}(M;\R)\rightarrow\h^{2n-1}(M;\R)\] is injective (hence an isomorphism).
\end{itemize}
\end{definition}
\begin{remark}[Examples satisying $(\bigstar)$]
The standard symplectic torus $(T^{2n}, dp\wedge dq)$ and any closed oriented surface $\Sigma_{g}$ of genus $g\geq1$ with the standard area form are symplectically aspherical. Symplectic products of the form $(M\times N, \om_{M}\oplus\om_{N})$, where $(M,\om_{M})$ is positive (resp. negative) homologically monotone and $(N,\om_{N})$ is aspherical, are positive (resp. negative) spherically monotone but not homologically monotone. In particular, $\C P^{n}\times\rT^{2m}$ with symplectic form $\om_{\mrm{FS}}\oplus dp\wedge dq$ is positive spherically monotone. Hypersurfaces of $\C P^{n}$ defined by setting $z_{0}^{m}+\cdots+z_{n}^m=0$, $m>n+1$, are negative homologically monotone (see \cite{McDuffSalamonJhol}). Finally, all closed K\"{a}hler manifolds satisfy the weak Lefschetz property.
\end{remark}
\begin{thm}\label{thm: main}
Let $(M,\om)$ be a closed symplectic manifold satisfying $(\bigstar)$, then the $e$-homomorphism is injective.
\end{thm}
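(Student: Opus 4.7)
The plan is to show that, under each of the three assumptions packaged into $(\bigstar)$, any symplectic loop $\{\psi_{t}\}$ with contractible orbits has $\wflux(\{\psi_{t}\})=0$. As in the introduction, the central tool is the torus map $\alpha:\rT^{2}\to M$, $\alpha(t,s)=\psi_{t}(\gamma(s))$, associated with a $1$-cycle $\gamma$, together with the identity $\langle\wflux(\{\psi_{t}\}),[\gamma]\rangle=\langle[\om],\alpha_{*}[\rT^{2}]\rangle$ coming from \eqref{eq: Flux_evaluation}. Since $\gamma$ is arbitrary, it suffices to establish $\langle[\om],\alpha_{*}[\rT^{2}]\rangle=0$ in each case.

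The first step, which handles the symplectically aspherical and spherically monotone cases simultaneously, is to observe that each loop $t\mapsto\psi_{t}(\gamma(s))$ is an orbit and so null-homotopic. The homotopy extension property then lets us deform $\alpha$ to a map that is constant along $S^{1}\times\{s_{0}\}$, and the deformed map factors through $\rT^{2}/(S^{1}\times\{s_{0}\})\simeq S^{1}\vee S^{2}$. Hence $\alpha_{*}[\rT^{2}]\in\h^{S}_{2}(M;\Z)$, which settles the aspherical case. In the monotone case one further needs $c_{1}(\alpha_{*}[\rT^{2}])=0$; the plan is to construct a symplectic bundle isomorphism $\alpha^{*}TM\cong q^{*}(\gamma^{*}TM)$, with $q:\rT^{2}\to S^{1}$ the projection onto the $s$-factor, from the family $d\psi_{t}:T_{\gamma(s)}M\to T_{\psi_{t}\gamma(s)}M$. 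The conditions $\psi_{0}=\psi_{1}=\id$ make this trivialization globally defined across $t=0,1$, and $c_{1}$ of any bundle over $S^{1}$ vanishes.

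For the weak Lefschetz case I would instead aim for the intrinsic identity $u\cup[\om]^{n-1}=0$ in $\h^{2n-1}(M;\R)$, where $u:=\wflux(\{\psi_{t}\})$, and then invoke injectivity of $[\om]^{n-1}$. The setting is $S^{1}\times M$ with the sweep $\Psi(t,x)=\psi_{t}(x)$ and projection $p:S^{1}\times M\to M$. Writing $\Psi^{*}\om=p^{*}\om+dt\wedge\psi_{t}^{*}\iota_{X_{t}}\om$ and using that each $\psi_{t}$ acts trivially on $\h^{*}(M;\R)$ gives $[\Psi^{*}\om]=p^{*}[\om]+[dt]\cup p^{*}u$, and hence $[\Psi^{*}\om]^{n}=p^{*}[\om]^{n}+n\,[dt]\cup p^{*}([\om]^{n-1}\cup u)$. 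For any closed oriented $(2n-1)$-cycle $\Sigma\subset M$, Fubini against the $2n$-cycle $S^{1}\times\Sigma$ produces $\int_{S^{1}\times\Sigma}\Psi^{*}\om^{n}=n\,\langle u\cup[\om]^{n-1},[\Sigma]\rangle$. On the other hand, this integral equals $\deg(\Psi|_{S^{1}\times\Sigma})\cdot\int_{M}\om^{n}$, and at a regular value $y\in M$ the degree is the signed count of $t\in S^{1}$ with $\psi_{t}^{-1}(y)\in\Sigma$, i.e.\ the intersection number of the loop $t\mapsto\psi_{t}^{-1}(y)$ with $\Sigma$. That loop is null-homotopic because $\{\psi_{t}^{-1}\}$ inherits contractible orbits from $\{\psi_{t}\}$, so the degree vanishes and $u\cup[\om]^{n-1}=0$.

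I expect the main obstacle to be this weak Lefschetz case. The aspherical case is essentially the remark already in the introduction, and the monotone case only adds the brief Chern-class computation above. By contrast, the Lefschetz argument requires moving to $S^{1}\times M$, keeping the Künneth decomposition of $[\Psi^{*}\om]^{n}$ correctly assembled, and matching the degree term with an intersection of a null-homotopic loop against $\Sigma$. Once these are in place, weak Lefschetz converts $u\cup[\om]^{n-1}=0$ directly to $u=0$.
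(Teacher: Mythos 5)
Your proof is correct, and in the spherically monotone case it takes a genuinely different route from the paper. The aspherical case is the remark already made in Section \ref{sec: e-homomorphism}, and your weak Lefschetz argument is a self-contained derivation, via the sweep $\Psi(t,x)=\psi_t(x)$ on $S^1\times M$ and a degree count, of the classical identity quoted in the paper from Ono's work (that the orbit classes are Poincar\'e dual to a constant multiple of $\wflux(\{\psi_t\})\wedge\om^{n-1}$); the only point to make explicit there is that $\h_{2n-1}(M;\R)$ is spanned by classes of embedded oriented hypersurfaces (Poincar\'e duals of maps to $S^1$), so the degree test really detects all of $\h^{2n-1}(M;\R)$, and that the relevant signed count is the homological intersection number of a null-homotopic loop with $\Sigma$, hence zero. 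The genuine divergence is in the monotone case: the paper's principal proof goes through Floer--Novikov cohomology (an isomorphism with $\h^{*}(M;\Q)\otimes\Lambda_{\om}$ for paths whose flux lies in $\ker e$, a comparison with the Morse--Novikov cohomology of the flux form, and the rank inequality of Proposition \ref{prop: Morse_v_MorseNovikov}), while its alternative proof invokes McDuff's theorem that $c_1$ vanishes on the swept torus as a black box described as ``highly non-trivial.'' You instead prove that vanishing directly: the family $d\psi_t$ defines an isomorphism of \emph{symplectic} vector bundles $q^{*}(\gamma^{*}TM)\cong\alpha^{*}TM$ over $\rT^{2}$ (well defined because $\psi_0=\psi_1=\id$ and because each $d\psi_t$ preserves $\om$, which is what makes $c_1$ an invariant of the identification), so $c_1(\alpha^{*}TM)$ is pulled back from $S^1$ and vanishes; combined with the sphericity of $\alpha_{*}[\rT^{2}]$ obtained by contracting one orbit, this yields $\brat{[\om],A_\gamma}=\la\brat{c_1(M),A_\gamma}=0$. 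This is an elementary and correct replacement for the citation of McDuff and makes the second proof self-contained; what it does not recover is the Floer-theoretic by-product of the paper's main argument, namely the computation of $\hfn^{*}$ for paths with flux in $\ker e$ and the resulting comparison of $\ker\cI_{\om}\cap\ker\cI_{\eta}\cap\ker\cI_{c_{1}}$ with $\ker(\cI_{\om}+\cI_{\eta})\cap\ker\cI_{c_{1}}$.
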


\noindent When the weak Lefschetz property is satisfied the injectivity of the $e$-homomorphism follows from classical arguments that were known at least as early as the work of Ono in \cite{Ono_Lefschetz}, see also \cite{McDuffSalamonIntro3, LMP1}. Indeed, if $\{\psi_{t}\}$ is a symplectic loop inducing a symplectic vector field $X_{t}$, then the homology classes of its orbits are Poincar\'{e} dual to the class 
\begin{equation*}
	\frac{1}{\mrm{Vol}(M)}\bigg[\wflux(\{\psi_{t}\})\wedge\frac{\om^{n-1}}{(n-1)!}\bigg]\in\h^{2n-1}(M;\R).
\end{equation*}
Therefore, by the injectivity of $[\om]^{n-1}$, we have that $\wflux(\{\psi_{t}\})=0$ whenever $ev(\{\psi_{t}\})=1$, since every orbit of $\{\psi_{t}\}$ is homologically trivial. We also note that in \cite{Kedra-Kotschick-Morita}, the implication of Proposition \ref{prop: vanishing_of_flux_group} is proven under the weak Lefschetz assumption. 
\subsubsection{Outline of the proof of Theorem \ref{thm: main}}
When $(M,\om)$ is spherically monotone we provide two proofs of the injectivity of the $e$-homomorphism, which are detailed in Section \ref{sec: proof_of_main_thm}. The first proof relies on looking at this problem from the perspective of Floer-Novikov theory developed by Lê-Ono \cite{Le-Ono} (see also \cite{Ono_FNCohomology} and \cite{Ono_FluxConjecture}). Floer-Novikov cohomology is a natural generalization of Hamiltonian Floer cohomology in the sense that to a symplectic path $\{\psi_{t}\}$ based at identity with non-degenerate endpoint $\psi=\psi_{1}$, we can associate a Floer-type cohomology group $\hfn^{*}(\{\psi_{t}\};J)$ that, up to a natural isomorphism, depends only on the flux of $\{\psi_{t}\}$. The proof has two key steps. First, using ideas in \cite{Ono_FluxConjecture} we obtain an isomorphism 
\begin{equation*}
	\hfn(\{\psi_{t}\};J)\cong\h^{*}(M;\Q)\otimes\Lambda_{\om},
\end{equation*}
for a symplectic path $\{\psi_{t}\}$ with $\wflux(\{\psi_{t}\})=[\eta]\in\ker e$. Next, we show that when $(M,\om)$ is spherically monotone the Floer-Novikov cohomology of a symplectic path $\{\psi_{t}\}$ is isomorphic to the Morse-Novikov cohomology $\hn^{*}(M, \eta)$  of its flux. A simple rank comparison then shows that this is only possible when $\wflux(\{\psi_{t}\})=0$, which concludes the argument. \\

\noindent The second proof follows from a result of McDuff \cite[Theorem 1]{McDuff_Monotone}, from which the triviality of $\Ga$ in the homologically monotone setting follows. Let $\{\psi_{t}\}$ be a symplectic loop. For a loop $\gamma$, set $\al(t,s)=\psi_{t}(\gamma(s))$ as before. McDuff's result implies that the $2$-cycle $A_{\gamma}$ represented by the torus $\im(\al)$ satisfies 
\begin{equation}\label{eq: mcduff}
	\brat{c_{1}(M),A_{\gamma}}=0.
\end{equation}
If $\{\psi_{t}\}$ has trivial evaluation, then $\im(\al)$ can be represented by a sphere. Thus, we obtain
\begin{equation*}
	\brat{\wflux(\{\psi_{t}\}),[\gamma]}=\brat{[\om],A_{\gamma}}=\lambda\cdot\brat{c_{1}(M),A_{\gamma}}=0,
\end{equation*}
where $\lambda\neq0$ is the monotonicity constant. Since $\gamma$ is arbitrary, we conclude that $\wflux(\{\psi_{t}\})=0$. While this proof is easier, it heavily relies on McDuff's result which was proven using highly non-trivial topological arguments. The first proof, on the other hand, is symplectic in nature.

\subsection{Symplectic circle actions}\label{sec: Symplectic_circle_actions}
Let $\rS^{1}=\R/\Z$ be the standard circle group. Let $(M,\om)$ be a symplectic manifold equipped with a smooth $\rS^{1}$-action  generated by a vector field $X$. The contraction of the symplectic form with the vector field $X$ defines a $1$-form $\iota_{X}\om$ on $M$. The circle action is called \textit{symplectic} whenever $\iota_{X}\om$ is closed, and \textit{Hamiltonian} if it is also exact. Knowing that the action is Hamiltonian has several advantages. For example, one can use a primitive $H$ of $\iota_{X}\om$, referred to as a \textit{moment map}, to obtain a symplectic quotient of $M$ at a regular value of $H$. This procedure is used, in particular, to reduce the dimension of the phase-space associated to problems arising in classical mechanics that have a circular symmetry. If $\h^{1}(M;\R)=0$, it is clear that every symplectic circle action is Hamiltonian. Otherwise, it becomes substantially more difficult to determine whether $\iota_{X}\om$ is an exact $1$-form. Finding sufficient and necessary conditions assuring that a symplectic circle action is Hamiltonian has been a subject of interest at least as early as the work of T. Frankel in the late 50s showing the following:
\begin{thm*}[T.Frankel \cite{Frankel}]
A symplectic circle action on a closed K\"{a}hler manifold is Hamiltonian if and only if it has fixed points.
\end{thm*}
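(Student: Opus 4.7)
The plan is to treat the two directions separately, with essentially all of the substantive content residing in the ``if'' direction.

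The ``only if'' direction is a standard compactness argument. Given a moment map $H:M\ra\R$ satisfying $\iota_{X}\om=dH$, the compact manifold $M$ forces $H$ to attain its maximum at some $p\in M$; at such a point $dH_{p}=0$, and nondegeneracy of $\om$ implies $X_{p}=0$, so $p$ is a fixed point of the action.

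For the ``if'' direction I would place the hypothesis within the flux/evaluation framework of Section \ref{sec: e-homomorphism}. The $\rS^{1}$-action is, by definition, a loop $\{\psi_{t}\}_{t\in[0,1]}\subset\Symp_{0}(M,\om)$ based at $\id$, and the existence of a fixed point $p$ makes the evaluation orbit at $p$ constant; consequently $ev(\{\psi_{t}\})=1\in\pi_{1}(M,p)$. Since a closed K\"{a}hler manifold satisfies the weak Lefschetz property, condition $(\bigstar)$ holds, and Theorem \ref{thm: main} yields injectivity of the $e$-homomorphism. Combining this with diagram (\ref{dgm: evaluation_flux}) forces
\begin{equation*}
	\wflux(\{\psi_{t}\})=0\in\h^{1}(M;\R).
\end{equation*}
Because the generator $X$ is time-independent, $\wflux(\{\psi_{t}\})$ agrees with $[\iota_{X}\om]$, so the closed $1$-form $\iota_{X}\om$ is in fact exact; any primitive serves as a moment map and the action is therefore Hamiltonian.

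The key conceptual step — injectivity of the $e$-homomorphism in the weak Lefschetz (in particular K\"{a}hler) setting — is entirely subsumed by Theorem \ref{thm: main}. Consequently the main obstacle is upstream, in the proof of that theorem; once it is in place, Frankel's statement reduces to the short three-step packaging above, and I anticipate no further technical difficulties.
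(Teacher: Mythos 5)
Your proposal is correct and follows essentially the same route the paper takes: Frankel's theorem is recovered as the special case of Proposition \ref{prop: S1_actions} where injectivity of the $e$-homomorphism comes from the weak Lefschetz property (satisfied by all closed K\"ahler manifolds), exactly as in your three-step packaging. The ``only if'' direction via the maximum of a moment map is standard and fine.
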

\noindent This result was later generalized by Ono  \cite{Ono_Lefschetz} to closed Lefschetz manifolds. The condition of having fixed points was shown to be sufficient to guarantee the exactness of the circle action in a few other instances. McDuff \cite{McDuff_MomentMap}  proved it when $M$ has dimension four, while Ono  \cite{Ono_S1-ActionAspherical} and Ginzburg \cite{Ginzburg_RemarksS1Actions} showed it in the symplectically aspherical case. Tolman-Weitsman \cite{Tolman-Weitsman} proved that it remains true for semi-free circle actions with isolated fixed points. Finally, Lupton-Oprea \cite{Lupton-Oprea} and McDuff \cite{McDuff_Monotone} showed that every symplectic circle action is Hamiltonian when $(M,\om)$ is homologically monotone, i.e. the cohomology class $[\om]$ is a non-zero multiple of the first Chern class $c_{1}(M)$, a fact which is not true in the more general spherically monotone setting. Indeed, one can consider the symplectic circle action on $\rT^{2}\times\rS^{2}$ (with the product symplectic form of the standard symplectic structures) given by rotation in the first factor and identity on the second.  \\\

\noindent On the other hand,  McDuff constructed in \cite{McDuff_MomentMap} a non-Hamiltonian circle action with fixed tori on a closed $6$-dimensional Calabi-Yau symplectic manifold (see \cite{Cho-Kim_CY}), showing that the condition $M^{\rS^{1}}\neq\emptyset$ alone is not sufficient to guarantee the exactness of $\iota_{X}\om$ for general closed symplectic manifolds. McDuff and Salamon then asked if every symplectic circle action with isolated fixed points on a closed symplectic manifold is Hamiltonian. This question was answered in the negative by Tolman in \cite{Tolman_NonHamiltonianS1-Action}, see also \cite{Jang-Tolman}. It remains unclear when such examples can exist, or from another viewpoint, how large is the class of closed symplectic manifolds for which the presence of fixed points is equivalent to the exactness of the circle action. Nonetheless, the $e$-homomorphism gives a partial characterization of this class. In particular, we have the following:
\begin{prop}\label{prop: S1_actions}
Let $(M,\om)$ be a closed symplectic manifold such that the $e$-homomorphism is injective. Then, a symplectic circle action is Hamiltonian if and only if it has fixed points.
\end{prop}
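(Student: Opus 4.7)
The plan is to verify both directions of the biconditional, with the forward implication being standard and the reverse being where injectivity of $e$ enters.

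For the easy direction (Hamiltonian implies fixed points), I would invoke the standard argument: if the action admits a moment map $H : M \to \R$, then $H$ attains its extrema on the closed manifold $M$, and at any such critical point $x$ we have $dH_x = 0$, so $\iota_{X_x}\om = 0$; non-degeneracy of $\om$ forces $X_x = 0$, making $x$ a fixed point.

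For the interesting direction, let $\{\psi_t\}_{t\in\R/\Z}$ be the symplectic loop associated to the circle action, generated by the (time-independent) vector field $X$, and suppose there is a fixed point $x_0 \in M^{\rS^1}$. Since $\psi_t(x_0)=x_0$ for all $t$, the evaluation loop $t \mapsto \psi_t(x_0)$ is constant, so $ev(\{\psi_t\})=1 \in \pi_1(M)$. By the commutativity of diagram (\ref{dgm: evaluation_flux}),
\begin{equation*}
    e\bigl(\wflux(\{\psi_t\})\bigr) = ev(\{\psi_t\}) = 1.
\end{equation*}
Injectivity of $e$ then yields $\wflux(\{\psi_t\})=0 \in \h^1(M;\R)$. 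Since $X$ is time-independent, $\wflux(\{\psi_t\})=[\iota_X\om]$, so $\iota_X\om$ is exact, i.e.\ the circle action is Hamiltonian.

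There is no real obstacle here: the content of the proposition lies entirely in extracting triviality of the flux from triviality of the evaluation, which is precisely what injectivity of $e$ provides. The only thing worth double-checking is that the flux of a time-independent symplectic vector field equals $[\iota_X\om]$, which is immediate from the integral defining $\wflux$.
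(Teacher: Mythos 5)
Your proof is correct and follows essentially the same route as the paper: the fixed point makes the evaluation of the induced loop trivial, and injectivity of $e$ then forces $\wflux(\{\psi_t\})=[\iota_X\om]=0$. The paper only records this direction explicitly, leaving the standard ``moment map has critical points'' argument implicit, so your inclusion of it is fine but adds nothing essential.
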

\begin{proof}
If a symplectic circle action has a fixed point then the symplectic loop $\{\psi_{t}\}$ induced by it has trivial evaluation. The injectivity of the $e$-homomorphism implies that $[\iota_{X}\om]=\wflux(\{\psi_{t}\})=0$. Here, $X$ is the time-independent vector field generating the action.
\end{proof}
\noindent In view of Theorem \ref{thm: main}, we obtain the following:
\begin{thm}\label{thm: S1-action_monotone}
Let $(M,\om)$ be a closed spherically monotone symplectic manifold. Then, a symplectic circle action is Hamiltonian if and only if it has a fixed point.
\end{thm}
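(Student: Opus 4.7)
The plan is to recognize that this statement is essentially a corollary of the two preceding results, so the proof will be a matter of assembling them cleanly and supplying the easy converse direction. First I would observe that a spherically monotone closed symplectic manifold satisfies condition $(\bigstar)$ by definition. Consequently, Theorem \ref{thm: main} applies and the $e$-homomorphism \[e:\Ga\to\pi_1(M)\] is injective. This is the only place where the monotonicity hypothesis enters, and it is the point where one of the two proofs of Theorem \ref{thm: main} (via Floer--Novikov cohomology, or via McDuff's theorem on $c_1$-vanishing on tori swept out by symplectic loops) is being invoked under the hood.

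Next I would appeal to Proposition \ref{prop: S1_actions}: injectivity of $e$ immediately yields that any symplectic circle action $\{\psi_t\}$ with a fixed point $x_0$ has trivial evaluation $ev(\{\psi_t\})=[\psi_t(x_0)]=1$, and therefore, by the commutative diagram (\ref{dgm: evaluation_flux}), satisfies $\wflux(\{\psi_t\})=0$. Since the generating vector field $X$ is time-independent, one has $\wflux(\{\psi_t\})=[\iota_X\om]$, so $\iota_X\om$ is exact and the action is Hamiltonian. This takes care of the nontrivial implication.

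For the converse direction, which is classical, I would simply note that a Hamiltonian circle action on a closed manifold possesses a moment map $H:M\to\R$ whose critical points are precisely the fixed points of the action; since $M$ is compact, $H$ attains a maximum, and such a critical point is a fixed point. The main obstacle, in any meaningful sense, has already been absorbed into Theorem \ref{thm: main}; once that is in hand, the current theorem is essentially bookkeeping, and the proof can be written in just a few lines.
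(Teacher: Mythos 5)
Your proposal is correct and follows exactly the paper's route: the theorem is stated as an immediate consequence of Theorem \ref{thm: main} (spherical monotonicity gives condition $(\bigstar)$, hence injectivity of the $e$-homomorphism) combined with Proposition \ref{prop: S1_actions}. Your addition of the classical converse via the moment map's critical points is a harmless and correct completion of the bookkeeping.
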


\noindent A result of McDuff \cite{McDuff_Uniruled} showed that every closed symplectic manifold that admits a Hamiltonian circle action is uniruled in the Gromov-Witten sense. In particular, they are geometrically uniruled: for each $\om$-compatible almost complex structure $J$ and each point $x\in M$, there is a non-constant $J$-holomorphic sphere $u$ such that $x\in\im(u)$. Symplectic manifolds that are symplectically Calabi-Yau are not geometrically uniruled, neither are (spherically) negative monotone ones. By considering this fact in addition to Theorem \ref{thm: S1-action_monotone} we obtain the following corollary:
\begin{cor}\label{thm: fixed_pts_of_S1-aciton}
Let the circle act symplectically and non-trivially on a closed symplectic manifold $(M,\om)$ such that \[c_{1}(M)|_{\pi_{2}(M)}=\la\cdot[\om]|_{\pi_{2}(M)},\] for $\la\in\R$. Then
\begin{itemize}
    \item[(i)] If $\la>0$, the action is Hamiltonian if and only if it has fixed points.
    \item[(ii)] If $\la<0$, the action is non-Hamiltonian, and has no fixed points.
    \item[(iii)] If $\la=0$ the action is non-Hamiltonian.
\end{itemize}
\end{cor}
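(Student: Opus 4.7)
My plan is to deduce all three parts by combining Theorem \ref{thm: S1-action_monotone} with McDuff's theorem from \cite{McDuff_Uniruled} that any closed symplectic manifold admitting a Hamiltonian circle action is geometrically uniruled, together with the two cited negative facts: spherically negative monotone manifolds and symplectically Calabi--Yau manifolds are not geometrically uniruled.

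For part (i), the assumption $\lambda>0$ means $[\om]|_{\pi_{2}(M)}$ is a positive multiple of $c_{1}(M)|_{\pi_{2}(M)}$, so $(M,\om)$ is positive spherically monotone and condition $(\bigstar)$ is satisfied. Theorem \ref{thm: S1-action_monotone} then applies verbatim and gives the claimed equivalence.

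For part (ii), the assumption $\lambda<0$ makes $(M,\om)$ negative spherically monotone, so again $(\bigstar)$ holds and Theorem \ref{thm: S1-action_monotone} yields: the circle action is Hamiltonian if and only if it has a fixed point. To exclude the Hamiltonian case, I invoke McDuff's theorem: if the $\mathrm{S}^{1}$-action were Hamiltonian, then $(M,\om)$ would be geometrically uniruled, contradicting the fact that negative spherically monotone manifolds are not. Hence the action is non-Hamiltonian, and by the equivalence just established it has no fixed points.

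For part (iii), the hypothesis $c_{1}(M)|_{\pi_{2}(M)}=0$ means $(M,\om)$ is symplectically Calabi--Yau. The non-trivial Hamiltonian circle action would again force geometric uniruledness by McDuff's result, which contradicts the non-uniruledness of symplectic Calabi--Yau manifolds. Thus the action is non-Hamiltonian. Note that one cannot strengthen this to ``no fixed points'' here, since McDuff's example in \cite{McDuff_MomentMap}, recalled in Section \ref{sec: Symplectic_circle_actions}, exhibits a non-Hamiltonian $\mathrm{S}^{1}$-action with fixed tori on a closed Calabi--Yau $6$-manifold; this is consistent because in the Calabi--Yau case Theorem \ref{thm: main} is silent, so the $e$-homomorphism need not be injective.

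The actual mathematical content of this corollary is entirely in the inputs --- Theorem \ref{thm: S1-action_monotone}, McDuff's uniruledness theorem, and the non-uniruledness of negative monotone and Calabi--Yau manifolds --- so there is no genuine obstacle in the write-up; the only care needed is to keep straight which half of the ``iff'' in Theorem \ref{thm: S1-action_monotone} is used to pass from ``non-Hamiltonian'' to ``no fixed points'' in part (ii), and to resist the temptation to claim the analogous statement in part (iii).
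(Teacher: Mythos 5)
Your argument is correct and coincides with the paper's intended proof: the paper gives no separate proof of this corollary, but the paragraph preceding it states exactly the ingredients you use (Theorem \ref{thm: S1-action_monotone}, McDuff's uniruledness theorem, and the non-uniruledness of negative spherically monotone and symplectically Calabi--Yau manifolds). Your added remark about why the ``no fixed points'' conclusion cannot be carried over to part (iii) is consistent with the McDuff example the paper itself recalls.
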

\begin{remark}\label{rmk: T2xM} 
When $\la\neq0$ there are examples of non-Hamiltonian symplectic circle actions. In particular, consider the symplectic product $\mathbb{T}^2\times M$ of the standard symplectic torus with any closed spherically monotone symplectic manifold, and the symplectic circle action given by $t\cdot(x,y,p)=(x+t,y,p)$ for $(x,y)\in T^{2}$ and $p\in M$.
\end{remark}
\noindent Another closely related question was raised by McDuff-Salamon \cite{McDuffSalamonIntro3}. They asked if there exists a symplectic free circle action whose orbits are contractible. Kotschick \cite{Kotschick_FreeS1-Actions} proved this to be the case for all symplectic manifolds of dimension four, even if the action is only assumed to be smooth. Furthermore, Kotschick produced examples of symplectic free circle actions with contractible orbits in every even dimension greater or equal to six. As corollary of Theorem \ref{thm: main} and the argument in the proof of Proposition \ref{prop: S1_actions} we obtain the following:
\begin{thm}\label{thm: free_S1-action}
Let $(M,\om)$ be a closed symplectic manifold satisfying $(\bigstar)$. Then, every free symplectic circle action must have non-contractible orbits. 
\end{thm}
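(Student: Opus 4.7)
The plan is to argue by contradiction, combining Theorem \ref{thm: main} with the classical observation that a Hamiltonian circle action on a closed symplectic manifold must have fixed points.

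Suppose, contrary to the conclusion, that a closed symplectic manifold $(M,\om)$ satisfying $(\bigstar)$ admits a free symplectic circle action whose orbits are contractible. Let $X$ be the time-independent symplectic vector field generating the action and let $\{\psi_{t}\}_{t\in\R/\Z}$ denote the corresponding symplectic loop based at the identity. Contractibility of every orbit $\{\psi_{t}(x_{0})\}$ is precisely the statement that $ev([\{\psi_{t}\}])=1\in\pi_{1}(M)$. By Theorem \ref{thm: main}, the hypothesis $(\bigstar)$ implies that the $e$-homomorphism is injective, so the commutative diagram (\ref{dgm: evaluation_flux}) forces
\begin{equation*}
    \wflux([\{\psi_{t}\}]) = 0 \in \h^{1}(M;\R).
\end{equation*}

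Because $X$ is time-independent, the flux of the loop generated by $X$ equals $[\iota_{X}\om]$. Hence $\iota_{X}\om$ is exact, and the circle action is Hamiltonian: there exists a smooth function $H\colon M\to\R$ such that $dH=\iota_{X}\om$. Since $M$ is closed, $H$ attains its maximum and minimum, and at any such critical point $p$ one has $dH(p)=0$, so $\iota_{X}\om(p)=0$. The non-degeneracy of $\om$ then gives $X(p)=0$, producing a fixed point of the circle action and contradicting the freeness assumption. This contradiction completes the argument.

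There is no genuine obstacle beyond invoking Theorem \ref{thm: main}; the rest is standard. I would simply note that the proof is, in essence, an amplification of Proposition \ref{prop: S1_actions}, combined with the fact that the existence of any fixed point is already incompatible with a free action. As observed in the text preceding the statement, Kotschick's examples in \cite{Kotschick_FreeS1-Actions} show the hypothesis $(\bigstar)$ cannot be dropped in general.
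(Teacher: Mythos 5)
Your proof is correct and follows exactly the route the paper intends: the theorem is stated there as a corollary of Theorem \ref{thm: main} together with the argument of Proposition \ref{prop: S1_actions}, i.e.\ contractible orbits give trivial evaluation, injectivity of $e$ kills the flux $[\iota_{X}\om]$, and a Hamiltonian circle action on a closed manifold has fixed points, contradicting freeness. Nothing to add.
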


\subsection{Applications to symplectic torsion}
The injectivity of the $e$-homorphism, when combined with results in \cite{AtallahShelukhin_Torsion1}, provides applications to questions about the existence of finite subgroups of $\Symp_{0}(M,\om)$. In \cite{AtallahShelukhin_Torsion1}, it was shown that if $(M,\om)$ is positive spherically monotone, then the existence of a non-trivial finite subgroup of $\Ham(M,\om)$ implies that $(M,\om)$ is geometrically uniruled. Furthermore, it was shown that if $(M,\om)$ is negative spherically monotone, then there are no non-trivial finite subgroups of $\Ham(M,\om)$. Therefore, as a corollary of Propostion \ref{prop: vanishing_of_flux_group} and Theorem \ref{thm: main} we obtain the following:
\begin{thm}\label{thm: Symp_no-torsion}\label{thm: symptorsion1}
Let $(M,\om)$ be a closed symplectic manifold such that \[c_{1}(M)|_{\pi_{2}(M)}=\la\cdot[\om]|_{\pi_{2}(M)},\] for $\la\in\R\setminus\{0\}$. Further suppose that either $\chi(M)\neq0$ or $\pi_{1}(M)$ has finite center. Then
\begin{itemize}
    \item[(i)] If $\la>0$, then the existence of a non-trivial finite subgroup of $\Symp_0(M,\om)$ implies that $(M,\om)$ is geometrically uniruled.
    \item[(ii)] If $\la<0$, then all finite subgroups of $\Symp_0(M,\om)$ are trivial.
\end{itemize}
\end{thm}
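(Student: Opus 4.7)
The plan is to reduce the statement to the Hamiltonian torsion results of \cite{AtallahShelukhin_Torsion1} by showing that, under the hypotheses, every finite subgroup of $\Symp_0(M,\om)$ is in fact contained in $\Ham(M,\om)$. This reduction is a short chain of the results already established in the excerpt, so the proof is essentially an assembly argument; there is no single hard step.

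First, observe that the Chern-class hypothesis says precisely that $(M,\om)$ is spherically monotone with constant $1/\lambda\neq 0$, so $(M,\om)$ satisfies condition $(\bigstar)$. By Theorem \ref{thm: main}, the $e$-homomorphism is injective. Combining this with the additional assumption that either $\chi(M)\neq 0$ or $\pi_1(M)$ has finite center, Proposition \ref{prop: vanishing_of_flux_group} yields that the flux group $\Gamma$ is trivial. In particular, by Banyaga's theorem, $\flux$ descends to a genuine homomorphism
\begin{equation*}
    \flux:\Symp_0(M,\om)\longrightarrow \h^1(M;\R),
\end{equation*}
whose kernel is $\Ham(M,\om)$.

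Next, let $G\subset \Symp_0(M,\om)$ be a non-trivial finite subgroup. For any $\phi\in G$ there is a positive integer $k$ with $\phi^k=\id$, so
\begin{equation*}
    k\cdot\flux(\phi)=\flux(\phi^k)=0
\end{equation*}
in $\h^1(M;\R)$. Since $\h^1(M;\R)$ is torsion-free we conclude $\flux(\phi)=0$, and therefore $\phi\in\Ham(M,\om)$. Thus $G$ is a non-trivial finite subgroup of $\Ham(M,\om)$.

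Finally, I would invoke the two main results of \cite{AtallahShelukhin_Torsion1}: in the case $\lambda>0$ the presence of a non-trivial finite subgroup in $\Ham(M,\om)$ forces $(M,\om)$ to be geometrically uniruled, which gives (i); in the case $\lambda<0$ the Hamiltonian group contains no non-trivial finite subgroups, contradicting the non-triviality of $G$ and giving (ii). The only point worth being careful about is verifying that the Chern-class normalization in the hypothesis matches the spherical monotonicity convention used both in the definition of $(\bigstar)$ and in \cite{AtallahShelukhin_Torsion1}, so that the sign of $\lambda$ here is consistent with the positive/negative monotone dichotomy cited there; this is routine once the conventions are unwound.
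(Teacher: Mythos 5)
Your proposal is correct and follows exactly the route the paper intends: the hypothesis gives spherical monotonicity (hence condition $(\bigstar)$), Theorem \ref{thm: main} gives injectivity of $e$, Proposition \ref{prop: vanishing_of_flux_group} then kills the flux group, so torsion elements of $\Symp_0(M,\om)$ have vanishing flux in the torsion-free group $\h^1(M;\R)$ and lie in $\Ham(M,\om)$, where the results of \cite{AtallahShelukhin_Torsion1} apply. The sign bookkeeping ($c_1 = \la[\om]$ on $\pi_2$ versus $[\om]=\la c_1$ on $\pi_2$) works out as you say, since the two constants are reciprocal and have the same sign.
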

\begin{remark}
The symplectic product of $(\C P^{n}, \om_{\mrm{FS}})$ with any closed oriented surface $\Sigma_{g}$ of genus $g\geq2$ with the standard area form satisfies condition (i) of Theorem \ref{thm: symptorsion1}. More generally, any symplectic product of the form $(M\times N, \om_{M}\oplus\om_{N})$, where $(M,\om_{M})$ is positive spherically monotone with $\chi(M)\neq0$ and $(N,\om_{N})$ is symplectically aspherical with $\chi(N)\neq0$. Furthermore, following \cite{dimca2012singularities}, the Euler characteristic of the degree $m$ hypersurface $X_{m}\subset\C P^{n}$ defined by setting $z_{0}^{m} + \cdots + z_{n}^{m}=0$ is given by \[\chi(X_{m}) = \frac{1}{m}\big((1-m)^{n+1} - 1\big) + n + 1.\] Therefore, $\chi(X_{m})\neq0$ when $m>n+1$. Hence, $X_{m}$, which is negative monotone, satisfies condition (ii) of Theorem \ref{thm: symptorsion1}.
\end{remark}

\noindent The injectivity of the $e$-homomorphism also gives information about the presence of fixed points of a non-trivial symplectic diffeomorphism $\psi\in\Symp_{0}(M,\om)$ of finite order. The following definition given by Polterovich in \cite{Polterovich_Distortion} naturally fits into this context.
\begin{definition}\label{df: contractible_type}
A fixed point $x$ of a symplectic diffeomorphism $\psi\in\Symp_{0}(M,\om)$ is of \textit{contractible type} if there exists a symplecitc path $\{\psi_{t}\}$ based at the identity with $\psi_{1}=\psi$ such that the loop $\{\psi_{t}(x)\}$ is contractible in $M$.
\end{definition}
\noindent The presence of a fixed point of contractible type of a non-trivial $\psi\in\Symp_{0}(M,\om)$ of finite order implies that it must be Hamiltonian. Indeed, we have the following:
\break
\begin{prop}\label{prop: torsion}
Let $(M,\om)$ be a closed symplectic manifold such that the e-homomorphism is injective. Further suppose that $\psi\in\Symp_{0}(M,\om)$ is non-trivial of finite order, i.e. $\psi^d=\id$ for some integer $d>1$. Then $\psi$ is Hamiltonian if and only if it admits a fixed point of contractible type.
\end{prop}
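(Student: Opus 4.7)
I would prove the two implications separately; the reverse direction is where the injectivity of the $e$-homomorphism enters, combined with an iteration argument exploiting $\psi^d=\id$. The forward direction follows from the (weak) Arnold conjecture: every $\psi\in\Ham(M,\om)$ admits a fixed point $x$ whose orbit under some Hamiltonian isotopy from $\id$ to $\psi$ is contractible in $M$, and such a fixed point is by definition of contractible type.

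For the reverse direction, suppose $x$ is a fixed point of contractible type witnessed by a symplectic path $\{\psi_t\}$ with $\psi_1=\psi$ and $\gamma(t):=\psi_t(x)$ contractible in $M$. Using $\psi^d=\id$, I form the concatenation
\begin{equation*}
\{\phi_s\}_{s\in[0,1]} \;:=\; \{\psi_t\}\,*\,\{\psi\circ\psi_t\}\,*\,\cdots\,*\,\{\psi^{d-1}\circ\psi_t\},
\end{equation*}
which is a symplectic loop based at the identity, and compute both invariants of $\{\phi_s\}$ attached to it by diagram (\ref{dgm: evaluation_flux}). On the evaluation side, the orbit $\{\phi_s(x)\}$ is the concatenation of the loops $\psi^k\circ\gamma$ at $x$ for $k=0,\ldots,d-1$; each is contractible, being the image of a null-homotopy of $\gamma$ under the symplectic diffeomorphism $\psi^k$, so their concatenation is contractible as well. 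Therefore $ev([\{\phi_s\}])=1$, and by the injectivity of the $e$-homomorphism, $\wflux([\{\phi_s\}])=0$ in $\h^{1}(M;\R)$.

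On the flux side, the $k$-th piece of the concatenation is generated by $(\psi^{k})_{*}X_t$, where $X_t$ is the time-dependent vector field generating $\{\psi_t\}$. Since $\psi^k\in\Symp_{0}(M,\om)$ acts trivially on $\h^{1}(M;\R)$, one has $[\iota_{(\psi^{k})_{*}X_t}\om]=[\iota_{X_t}\om]$, so each piece contributes the same flux as $\{\psi_t\}$ and additivity under concatenation gives
\begin{equation*}
\wflux([\{\phi_s\}]) \;=\; d\cdot\wflux([\{\psi_t\}]).
\end{equation*}
Combining with the previous vanishing and using that $\h^{1}(M;\R)$ is torsion-free forces $\wflux([\{\psi_t\}])=0$, so Banyaga's theorem gives $\psi=\psi_1\in\Ham(M,\om)$.

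The main obstacle I anticipate is the flux bookkeeping for the concatenated loop: verifying that the $1$-form $\iota_{(\psi^k)_{*}X_t}\om$ represents the same cohomology class as $\iota_{X_t}\om$, and that the time-reparametrizations built into the concatenation do not alter the total flux. Once this is secured, the argument is essentially an iteration combined with a single application of the injectivity of the $e$-homomorphism.
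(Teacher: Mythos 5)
Your proof is correct and follows essentially the same route as the paper: the paper works with the $d$-th power path $\{\psi_t^d\}$ in $\til\Symp_0(M,\om)$ (of which your concatenation is a homotopic representative), notes that its evaluation at the fixed point $x$ equals $[\{\psi_t(x)\}]^d=1$, and concludes $d\cdot\wflux(\{\psi_t\})=0$ from the injectivity of $e$ together with the torsion-freeness of $\h^1(M;\R)$. Your hands-on verification of the flux additivity is just an unwinding of the fact that $\wflux$ is a homomorphism on $\til\Symp_0(M,\om)$, and the forward direction via the Arnold-conjecture-type existence of a fixed point with contractible orbit is left implicit in the paper.
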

\begin{proof}
Suppose $x\in\fix(x)$ is of contractible type. Let $\{\psi_{t}\}$ be a symplectic path with $\psi_{1}=\psi$ such that $\{\psi_{t}(x)\}$ is contractible in $M$. Note that $\{\psi^{d}_{t}\}$ is a symplectic loop. Then,
	\begin{equation*}
	e(\wflux(\{\psi^d_{t}\}))=[\{\psi^{d}_{t}(x)\}]=[\{\psi_{t}(x)\}]^{d}=1.
	\end{equation*}
Hence, by the injectivity of $e$ and the fact that $\wflux$ is a homomorphism to a torsion free group, we have that $\wflux(\{\psi_{t}\})=0$. In particular, $\psi$ is Hamiltonian.
\end{proof}
\begin{cor}
Let $(M,\om)$ be a closed symplectic manifold satisfying $(\bigstar)$ such that $\chi(M)\neq0$. Then, all non-trivial symplectic diffeomorphisms $\psi\in\Symp_{0}(M,\om)$ of finite order are Hamiltonian. 
\end{cor}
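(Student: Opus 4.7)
The plan is to avoid invoking Proposition~\ref{prop: torsion} directly (which would demand producing a fixed point of contractible type) and instead route through the vanishing of the flux group, where every piece is already in place in the paper. First, since $(M,\om)$ satisfies $(\bigstar)$, Theorem~\ref{thm: main} gives the injectivity of the $e$-homomorphism. Together with the hypothesis $\chi(M)\neq 0$, Proposition~\ref{prop: vanishing_of_flux_group} then yields $\Ga=0$.

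Once $\Ga=0$, Banyaga's exact sequence reads
\begin{equation*}
1\ra\Ham(M,\om)\ra\Symp_{0}(M,\om)\xrightarrow{\flux}\h^{1}(M;\R)\ra 1,
\end{equation*}
so $\flux$ becomes an honest homomorphism to the torsion-free abelian group $\h^{1}(M;\R)$. Given a non-trivial $\psi\in\Symp_{0}(M,\om)$ with $\psi^{d}=\id$, the relation $d\cdot\flux(\psi)=\flux(\psi^{d})=0$ forces $\flux(\psi)=0$, and Banyaga's theorem identifies the kernel of $\flux$ with $\Ham(M,\om)$. This finishes the argument.

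As an alternative, one could try to invoke Proposition~\ref{prop: torsion}: since any element of $\Symp_{0}(M,\om)$ is isotopic to the identity, the Lefschetz number of $\psi$ equals $\chi(M)\neq 0$, so $\fix(\psi)\neq\emptyset$. The obstacle in this route is verifying that at least one such fixed point is of contractible type in the sense of Definition~\ref{df: contractible_type}; this is not automatic from mere existence of fixed points, and circumventing it appears to require exactly the vanishing of $\Ga$ that we use above. Hence the direct flux-vanishing argument is the cleanest, and no substantive new input beyond the cited propositions is needed.
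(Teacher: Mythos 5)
Your argument is correct; since the paper states this corollary without an explicit proof, the comparison is with the route its placement suggests. Sitting directly after Proposition \ref{prop: torsion}, the corollary reads as if one should produce a fixed point of contractible type, and you rightly flag that this is not automatic: the Lefschetz argument ($\psi$ acts trivially on homology, so its Lefschetz number is $\chi(M)\neq 0$) does give $\fix(\psi)\neq\emptyset$, but for a fixed point $x$ and any path $\{\psi_t\}$ from $\id$ to $\psi$ one only learns that $[\{\psi_t(x)\}]$ is torsion in $\pi_1(M)$ (its $d$-th power equals $ev(\{\psi_t^d\})=1$ because $ev$ is trivial when $\chi(M)\neq0$), which is weaker than contractibility when $\pi_1(M)$ has torsion. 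Your detour through Proposition \ref{prop: vanishing_of_flux_group} avoids this entirely: Theorem \ref{thm: main} gives injectivity of $e$, the hypothesis $\chi(M)\neq0$ then kills $\Ga$, and finite order plus torsion-freeness of $\h^{1}(M;\R)$ forces $\flux(\psi)=0$, i.e.\ $\psi\in\Ham(M,\om)$ by Banyaga's theorem. In substance this is the same mechanism underlying the paper's proofs of Propositions \ref{prop: vanishing_of_flux_group} and \ref{prop: torsion} (apply $e$ to $\wflux(\{\psi_t^d\})$, use triviality of $ev$, and divide by $d$ in a torsion-free group), so no new input is needed; but your packaging makes the logical dependence cleaner and sidesteps the contractible-type hypothesis that a literal application of Proposition \ref{prop: torsion} would require.
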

\noindent When the Euler characteristic of $(M,\om)$ is zero and $\psi\in\Symp_{0}(M,\om)$ is torsion, Propostion \ref{prop: appendix} tells us that it either has no fixed points or non-isolated ones. In the latter case, there is no a priori reason for the existence of a fixed point of contractible type. However, in the case of the standard symplectic torus, Polterovich \cite{Polterovich_Distortion} showed that any fixed point of a symplectic diffeomorphism $\psi\in\Symp_{0}(\rT^{2n}dp\wedge dq)$ is of contractible type. We can adapt that argument to prove the following:
\begin{cor}\label{cor: Torsion_on_T2}
Let $(M,\om)$ be a simply connected closed symplectic manifold satisfying $(\bigstar)$. Then, every non-Hamiltonian $\psi\in\Symp_{0}(\rT^{2n}\times M, dp\wedge dq\oplus\om)$ of finite order has no fixed points.
\end{cor}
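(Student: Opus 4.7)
The plan is to prove the contrapositive: if $\psi \in \Symp_{0}(\rT^{2n} \times M, dp\wedge dq \oplus \om)$ has finite order and admits a fixed point $q = (x_0, y_0)$, then $\psi$ is Hamiltonian. Adapting Polterovich's argument for the standard torus from \cite{Polterovich_Distortion}, I would show that $q$ is automatically a fixed point of contractible type, so that Proposition \ref{prop: torsion} applies to force $\psi$ to be Hamiltonian, contradicting the non-Hamiltonian hypothesis.

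First I would verify that the product $(\rT^{2n} \times M, dp \wedge dq \oplus \om)$ also satisfies $(\bigstar)$, so that Theorem \ref{thm: main} yields injectivity of the $e$-homomorphism on the product. Since $\pi_{2}(\rT^{2n}) = 0$, the natural inclusion gives $\pi_{2}(\rT^{2n} \times M) \cong \pi_{2}(M)$, the product symplectic form restricts to $\om$ on these spheres, and $c_{1}(\rT^{2n} \times M)|_{\pi_{2}(M)} = c_{1}(M)|_{\pi_{2}(M)}$; this immediately passes the symplectically aspherical and spherically monotone conditions from $M$ to the product. For the weak Lefschetz case, simple connectivity of $M$ gives $\h^{1}(\rT^{2n} \times M; \R) = \h^{1}(\rT^{2n}; \R)$, and a brief K\"unneth degree count shows that multiplication by $[dp \wedge dq \oplus \om]^{N-1}$ on this subspace (with $2N = \dim(\rT^{2n} \times M)$) reduces to a nonzero scalar multiple of $\alpha \mapsto \alpha \cup [dp \wedge dq]^{n-1} \otimes [\om]^{m}$, which is injective because $\rT^{2n}$ is K\"ahler and $[\om]^{m}$ is the volume class of $M$.

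With injectivity of $e$ on the product in hand, I would pick any symplectic path $\{\psi_t\}$ with $\psi_0 = \id$ and $\psi_1 = \psi$, and let $a \in \pi_{1}(\rT^{2n} \times M) \cong \Z^{2n}$ be the class of the loop $t \mapsto \psi_t(q)$, using that $M$ is simply connected so $\pi_{1}$ of the product reduces to that of the torus factor. Choosing a lift $\tilde a \in \Z^{2n} \subset \R^{2n}$ of $a$, the translation family $\Phi_t(x, y) = (x + t\tilde a, y)$ is a loop in $\Symp_{0}(\rT^{2n} \times M)$ precisely because $\tilde a$ is integral. The modified symplectic path $\tilde\psi_t = \Phi_t^{-1} \circ \psi_t$ still runs from $\id$ to $\psi$, and lifting the loop $t \mapsto \tilde\psi_t(q)$ to the universal cover of the torus factor shows it travels from $x_0$ to $x_0 + a - \tilde a = x_0$, hence is contractible. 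Thus $q$ is of contractible type for $\psi$, and Proposition \ref{prop: torsion} forces $\psi \in \Ham(\rT^{2n} \times M)$, completing the proof.

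The principal subtlety is showing that $(\bigstar)$ is preserved under product with $(\rT^{2n}, dp \wedge dq)$ when the second factor is simply connected; the monotone and aspherical cases are immediate from $\pi_{2}(\rT^{2n}) = 0$, while the weak Lefschetz case rests on the K\"unneth computation above, in which the hypothesis $\h^{1}(M;\R) = 0$ plays an essential role. The translation-loop normalization trick is then purely topological and presents no real obstacle.
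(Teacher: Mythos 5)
Your proof is correct and follows essentially the same route as the paper: normalize the path by composing with the integer-translation loop of the torus factor so that the orbit of the fixed point becomes contractible, then invoke Proposition \ref{prop: torsion}. The only difference is that you spell out the verification that $\rT^{2n}\times M$ satisfies $(\bigstar)$ (including the K\"unneth computation in the weak Lefschetz case), a step the paper merely asserts; your computation there is accurate.
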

\begin{proof}
Let $(x,p)\in\rT^{2n}\times M$ be a fixed point of $\psi$ and $\{\psi_{t}\}$ a symplectic path such that $\psi_{0}=\id$ and $\psi_{1}=\psi$. Consider the lift \[\til{\psi}_{t}:\R^{2n}\times M\ra\R^{2n}\times M\] of $\psi_{t}$ to the universal cover of $\rT^{2n}\times M$ and pick $(\til{x},p)\in\pi^{-1}(\{(x,p)\})$. Then, $\til{\psi}(\til{x},p)=(\til{x} + a,p)$ for some $a\in\Z^{2n}$. We can then define a symplectic flow $f_{t}\times\id_{M}$ on $\rT^{2n}\times M$ by setting \[f_{t}(y)=y-t\cdot a\quad\mrm{mod}\quad1.\] Note that its lift to the universal cover is given by \[(\til{f}_{t}\times\id_{M})(y,q)=(y-t\cdot a, q).\] Therefore, by setting $\varphi_{t}=f_{t}\circ\psi_{t}$, we obtain a symplectic path based at the identity with $\varphi_{1}=\psi$, and whose lift $\til{\varphi}_{t}$ satisfies $\til{\varphi}(\til{x},p)=(\til{x},p)$. Consequently, the loop $\{\varphi_{t}(x,p)\}$ is contractible in $\rT^{2n}\times M$. The claim of the corollary then follows by noting that $T^{2n}\times M$ satisfies condition $(\bigstar)$ and by Proposition \ref{prop: torsion}.
\end{proof}

\section{Preliminaries}
\subsection{Floer-Novikov Cohomology}\label{sec: FNC}
In Section \ref{sec: FNH_Flux_Conjecture} we review the construction of Floer-Novikov cohomology after Ono \cite{Ono_FluxConjecture}. This is a cohomological version of the construction introduced in Lê-Ono \cite{Le-Ono} with a slightly smaller coefficient ring. In Section \ref{sec: FNH_Variant} we review a variant of Floer-Novikov cohomology introduced in \cite{Ono_FNCohomology} which enables the comparison between symplectic paths with different flux. We refer to \cite{Le-Ono_Ukraine} for an in-depth discussion of the variants of Floer-Novikov cohomology and the relations among them. We also briefly recall the definition of classical Morse-Novikov cohomology and outline a few important properties it satisfies. For further details on Morse-Novikov cohomology see \cite{Le-Ono}, \cite{PolterovichRosen}, and \cite{FarberBook}. 

\subsubsection{Morse-Novikov Cohomology}
Let $M$ be a closed smooth manifold and let $\theta$ be a closed $1$-form on $M$.  Fix a ground field $\bK$. For our purposes it will be sufficient to consider the case $\bK=\Q$. Denote by $I_{\theta}: \pi_{1}(M)\ra\R$ the homomorphism given by integrating $\theta$ over a representative loop, that is:
\begin{equation*}
	I_{\theta}([\ga])=\int_{\ga}\theta. 
\end{equation*}
In particular, $I_{\theta}$ depends only on the cohomology class $[\theta]$. Denote by $\pi:\ol{M}^{\theta}\ra M$ the covering space of $M$ corresponding to $\ker I_{\theta}\subset\pi_{1}(M)$. It is the minimal abelian covering on which $\pi^{*}\theta$ is exact. The covering transformation group is given by $G_{\theta}=\pi_{1}(M)/\ker I_{\theta}$. We denote by $\Lambda_{\theta}$ the completion of the group ring of $G_{\theta}$ with respect to the filtration induced by $I_{\theta}$, that is:
\begin{equation*}
	\Lambda_{\theta}=\bigg\{\sum_{i}a_{i}g_{i}\,\Big|\,a_{i}\in\bK,\,g_{i}\in G_{\theta},\,\text{satisfying (A)}\Big\},
\end{equation*}
\begin{center}
	(A) For each $c\in\R$ the set $\{i\,|\,a_{i}\neq0, I_{\theta}(g_{i})<c\}$ is finite.
\end{center}
\vspace{0.2cm}
The fact that $\bK$ is a field implies that so is $\Lambda_{\theta}$. Let $\ol{f}$ be a choice of primitive for $\pi^{*}\theta$. Then, to each $\til{x}\in\crit(\ol{f})$ there corresponds a zero $x=\pi(\til{x})$ of $\theta$. A $1$-form $\theta$ is said to be Morse if all the critical points of $\ol{f}$ are non-degenerate. The Morse-Novikov cochain complex in degree $k$ with coefficients in $\bK$ is defined as:
\begin{equation*}
	\cn^{k}(M,\theta)=\bigg\{\sum_{i}a_{i}\til{x}_{i}\,\Big|\,a_{i}\in\bK,\,\til{x}_{i}\in\crit(\ol{f}),\,\mrm{index}(\til{x}_{i})=k, \,\text{satisfying (B)}\Big\},
\end{equation*}
\begin{center}
	(B) For each $c\in\R$ the set $\{i\,|\,a_{i}\neq0,\ol{f}(\til{x}_{i})<c\}$ is finite.
\end{center}
\vspace{0.2cm}
Note that $\cn^{*}(M,\theta)$ is finitely generated over $\Lambda_{\theta}$. For a choice of Riemannian metric $g$ on $M$, the coboundary operator $\delta$ is defined by counting bounded gradient trajectories of $\ol{f}$ with respect to the pullback metric $\pi^{*}g$ that are emerging from a critical point $\til{x}\in\crit(\ol{f})$ and converging to critical points $\til{y}\in\crit(\ol{f})$ such that $\mrm{index}(\til{y})-\mrm{index}(\til{x})=1$. We may assume that the gradient flow is of Morse-Smale type. The Morse-Novikov cohomology of $\theta$ is defined as $\hn^{*}(M,\theta)=H^{*}((\cn(M,\theta),\delta))$, and is a finitely generated $\Lambda_{\theta}$-module. The resulting cohomology is independent of the choice of Riemannian metric for which the flow is of Morse-Smale type. Furthermore, cohomologous $1$-forms have canonically isomorphic Morse-Novikov cohomologies. That is, if $[\theta_{1}]=[\theta_{2}]$, then $\hn^{*}(M,\theta_{1})$ is isomorphic to $\hn^{*}(M,\theta_{2})$ as graded vector spaces over $\Lambda_{\theta_{1}}=\Lambda_{\theta_{2}}$. More generally, we have the following:
\begin{prop}[{Lê-Ono, \cite[Theorem C.2]{Le-Ono}}]\label{prop: Le-Ono_C.2}
Let $\theta_{1}$ and $\theta_{2}$ be closed $1$-forms such that $\ker I_{\theta_{1}}\subset\ker I_{\theta_{2}}$. Then, for each degree $k$, \[\dim_{\Lambda_{\theta_{1}}}\hn^{k}(M,\theta_{1})\leq\dim_{\Lambda_{\theta_{2}}}\hn^{k}(M,\theta_{2}).\]
\end{prop}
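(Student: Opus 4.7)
My plan is to pass to the common cover $\ol M^{\theta_1}$ and compare the two Morse–Novikov complexes there via an auxiliary filtration. First I would replace $\theta_1$ by a cohomologous Morse $1$-form (by the invariance statement preceding the proposition, this does not change $\hn^*(M,\theta_1)$ or $\Lambda_{\theta_1}$). The hypothesis $\ker I_{\theta_1}\subset\ker I_{\theta_2}$ ensures that the pullback $\pi_1^*\theta_2$ is exact on $\ol M^{\theta_1}$, where $\pi_1:\ol M^{\theta_1}\to M$ is the covering projection, so I can fix primitives $\bar f_1$ for $\pi_1^*\theta_1$ and $\bar h$ for $\pi_1^*\theta_2$. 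The deck groups fit into a short exact sequence $1\to N\to G_{\theta_1}\to G_{\theta_2}\to 1$ with $N=\ker I_{\theta_2}/\ker I_{\theta_1}$, corresponding to the intermediate covering $\ol M^{\theta_1}\to\ol M^{\theta_2}$.

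The key step is to introduce a descending filtration on the Novikov complex $\cn^*(M,\theta_1)$ over $\Lambda_{\theta_1}$ indexed by the values of $\bar h$: elements of $N$ preserve $\bar h$, while the quotient $G_{\theta_2}$ records how $\bar h$ changes under the residual deck action. The expected outcome is that the associated graded of this filtration, after reorganizing generators into $N$-orbits and completing appropriately, can be identified with a chain complex of $\Lambda_{\theta_2}$-modules computing $\hn^*(M,\theta_2)$, with each generator of $\cn^*(M,\theta_2)$ being "covered" by a single $\Lambda_{\theta_1}$-generator of $\cn^*(M,\theta_1)$. A standard spectral sequence argument then yields the dimension inequality $\dim_{\Lambda_{\theta_1}}\hn^k(M,\theta_1)\leq\dim_{\Lambda_{\theta_2}}\hn^k(M,\theta_2)$ for every $k$, because a spectral sequence can only lose rank as one passes from the $E_1$-page to $E_\infty$.

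The main obstacle will be making the passage to the associated graded rigorous in view of the two different Novikov completions $\Lambda_{\theta_1}$ and $\Lambda_{\theta_2}$, which in general admit no direct ring homomorphism: the filtration of $G_{\theta_1}$ induced by $I_{\theta_1}$ does not descend compatibly to the $I_{\theta_2}$-filtration on $G_{\theta_2}$, so an element of $\Lambda_{\theta_1}$ need not project to an element of $\Lambda_{\theta_2}$. Verifying that the $N$-orbit sums arising in the associated graded converge in the $\Lambda_{\theta_2}$-norm, and that the resulting complex genuinely realizes the Morse–Novikov complex of $\theta_2$ rather than an analogous auxiliary object, will require careful estimates relating $\bar f_1$ and $\bar h$ on the critical points and the Morse trajectories of a suitable common gradient. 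If these convergence issues prove too delicate in this direct form, a back-up strategy is to approximate $\theta_1$ and $\theta_2$ by rationally-cohomologous forms (reducing $G_{\theta_i}$ to finitely generated free abelian groups) and perform the spectral sequence argument after choosing a splitting $G_{\theta_1}\cong N\oplus G_{\theta_2}$ of the extension, which makes the interaction of the two filtrations algebraically transparent.
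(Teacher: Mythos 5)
First, a point of reference: the paper does not prove this proposition at all --- it is quoted with a citation to L\^e--Ono --- so there is no in-paper argument to compare against, and your proposal must be judged on its own terms. Judged so, it has a genuine gap at its central step: the filtration of $\cn^{*}(M,\theta_{1})$ by the values of the primitive $\bar h$ of $\pi_{1}^{*}\theta_{2}$ is \emph{not preserved by the differential}. The coboundary operator counts gradient trajectories of $\bar f_{1}$, and along such a trajectory $\bar h$ changes by the integral of $\pi_{1}^{*}\theta_{2}$, a real number of arbitrary sign and size; nothing makes it monotone unless $[\theta_{2}]$ is a positive multiple of $[\theta_{1}]$. So there is no $E_{0}$-page, let alone an $E_{1}$-page identifiable with a complex computing $\hn^{*}(M,\theta_{2})$. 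Moreover, the difficulty you flag as ``the main obstacle'' is not a convergence estimate to be checked but a second structural failure: an element of $\cn^{*}(M,\theta_{1})$ is an infinite sum of critical points whose $\bar f_{1}$-values tend to $+\infty$ while their $\bar h$-values are unbounded in both directions, so the proposed filtration is neither exhaustive nor complete on the Novikov-completed complex, and the ``associated graded'' of $\Lambda_{\theta_{1}}$ with respect to $I_{\theta_{2}}$ is not $\Lambda_{\theta_{2}}$ or anything over which $\hn^{*}(M,\theta_{2})$ is defined. Your back-up strategy (rational approximation and a splitting $G_{\theta_{1}}\cong N\oplus G_{\theta_{2}}$) does not repair the first point, since the differential still mixes filtration levels.

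The standard argument is purely algebraic and avoids filtrations entirely. Both Novikov cohomologies are computed from one and the same finite free complex: by the usual identification of Morse--Novikov cohomology with cohomology in the local system $\Lambda_{\theta}$, one may take the cellular cochain complex $C^{*}$ of $\ol{M}^{\theta_{1}}$ over $\bK[G_{\theta_{1}}]$; then $C^{*}\otimes_{\bK[G_{\theta_{1}}]}\Lambda_{\theta_{1}}$ computes $\hn^{*}(M,\theta_{1})$, while $C^{*}\otimes_{\bK[G_{\theta_{1}}]}\Lambda_{\theta_{2}}$, taken along the ring surjection $p\colon\bK[G_{\theta_{1}}]\to\bK[G_{\theta_{2}}]$ (which exists precisely because $\ker I_{\theta_{1}}\subset\ker I_{\theta_{2}}$) followed by completion, has underlying complex the cellular complex of the intermediate cover $\ol{M}^{\theta_{2}}$ and so computes $\hn^{*}(M,\theta_{2})$. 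Each $\Lambda_{\theta_{i}}$ is a field containing the domain $\bK[G_{\theta_{i}}]$, so the rank of a differential matrix equals the largest size of a nonvanishing minor; since every minor of $p(A)$ is $p$ applied to the corresponding minor of $A$, one gets $\operatorname{rank}_{\Lambda_{\theta_{2}}}p(A)\le\operatorname{rank}_{\Lambda_{\theta_{1}}}A$. Substituting into $\dim H^{k}=c_{k}-\operatorname{rank}d^{k}-\operatorname{rank}d^{k-1}$ yields the inequality in every degree. If you want to retain a ``rank can only drop'' heuristic, the correct mechanism is this semicontinuity of rank under specialization of the group ring, not a spectral sequence of a $\bar h$-filtration.
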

\noindent In addition, we have the following useful proposition, which distinguishes Morse-Novikov cohomology from usual Morse cohomology.
\begin{prop}[{Ono, \cite[Proposition 4.12]{Ono_FluxConjecture}}]\label{prop: Morse_v_MorseNovikov}
Let $\theta$ be a closed $1$-form such that $[\theta]\neq0$. Then \[\dim_{\Lambda_{\theta}}\hn^{k}(M,\theta)=0\] for $k=0$ or $k=\dim(M)$.		
\end{prop}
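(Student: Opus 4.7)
The plan is to prove the two vanishing statements separately: first $\hn^{0}(M,\theta)=0$ by a direct geometric argument, and then $\hn^{n}(M,\theta)=0$ by Poincar\'e duality applied to $-\theta$, whose cohomology class is also nonzero.

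For $\hn^{0}(M,\theta)=0$, I take a cocycle $c=\sum_{i}a_{i}\til x_{i}\in\cn^{0}(M,\theta)$, where the $\til x_{i}$ are the local minima of $\ol f$. Let $\mathcal B(\til x_{i})\subset\ol M^{\theta}$ denote the basin of attraction of $\til x_{i}$ under the descending gradient flow of $\ol f$. The basins are open, pairwise disjoint, and exhaust the cover, because every downward trajectory converges to a critical point: on the compact manifold $M$, local primitives of the Morse form $\theta$ serve as Lyapunov functions for the projected vector field, so its $\omega$-limit sets are contained in the discrete zero set of $\theta$, and the lift of a convergent downstairs trajectory converges to a specific preimage critical point. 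Together with the stable manifolds $W^{s}(\til z)$ of critical points $\til z$ of positive index, the basins stratify $\ol M^{\theta}$, with codimension-$1$ walls $W^{s}(\til y)$ indexed by index-$1$ critical points $\til y$.

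For each such $\til y$, the two ends of the $1$-dimensional unstable arc $W^{u}(\til y)$ converge to index-$0$ points $\til x_{\pm}$, and the coefficient of $\til y$ in $\delta c$ equals $a_{+}-a_{-}$ under an appropriate orientation. The cocycle condition therefore says that the locally constant function $\psi_{c}\colon p\mapsto a_{i}$, defined on $\bigsqcup_{i}\mathcal B(\til x_{i})$, extends continuously across every index-$1$ wall to a locally constant $\bK$-valued function on
\[
U:=\ol M^{\theta}\setminus\bigcup_{\mrm{ind}(\til z)\geq 2}W^{s}(\til z).
\]
Since $U$ is the complement in the connected manifold $\ol M^{\theta}$ of a union of submanifolds of codimension $\geq 2$, it is itself connected; hence $\psi_{c}$ is globally constant, and all $a_{i}$ agree with a single scalar $a\in\bK$. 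Finally, condition (B) forces $a=0$: the hypothesis $[\theta]\neq 0$ makes $I_{\theta}(\pi_{1}(M))\subset\R$ unbounded below, so the $G_{\theta}$-orbit of any fixed $\til x_{0}$ supplies infinitely many index-$0$ critical points with $\ol f$-values tending to $-\infty$, and $a\neq 0$ would contradict (B). Hence $c=0$ and $\hn^{0}(M,\theta)=0$.

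For $\hn^{n}(M,\theta)=0$, I invoke Poincar\'e duality: replacing $\ol f$ by $-\ol f$ gives a primitive of $-\pi^{*}\theta$ on the same cover, with the Morse index of every critical point complemented to $n-\mrm{ind}$, and the Novikov ring is unchanged since $\Lambda_{-\theta}=\Lambda_{\theta}$. Matching generators and differentials (up to sign) yields an isomorphism $\hn^{n}(M,\theta)\cong\hn^{0}(M,-\theta)$, and the previous part applied to $-\theta$ (whose class is also nonzero) gives the vanishing. The main obstacle in this plan is the local-constancy step, which requires (a) convergence of every downward trajectory on the non-compact cover, ruling out ``escape to $-\infty$'', and (b) connectedness of the complement of a codimension-$\geq 2$ subcomplex of $\ol M^{\theta}$; both are standard facts, but together they are what allow the cocycle condition to be translated into the purely topological conclusion that $\psi_{c}$ is constant.
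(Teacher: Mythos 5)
The paper itself offers no proof of this proposition: it is imported verbatim from Ono \cite[Proposition 4.12]{Ono_FluxConjecture}, so there is no in-paper argument to compare yours against. Judged on its own merits, your proposal has a genuine gap at exactly the step you flag as ``standard.'' You assert that every downward gradient trajectory of $\ol{f}$ on $\ol{M}^{\theta}$ converges to a critical point, so that the basins of the index-$0$ points together with the stable manifolds of positive-index points exhaust the cover. Your justification --- that local primitives of $\theta$ act as Lyapunov functions forcing $\omega$-limit sets into the zero set --- is invalid: a local primitive controls a trajectory only while it remains in one chart, and the gradient flow of a non-exact closed $1$-form on a closed manifold can be recurrent away from its zeros (the flow dual to $dx+\alpha\,dy$ on the flat torus, with $\alpha$ irrational, is minimal and has no zeros at all; local perturbations creating a min--saddle pair do not destroy the non-convergent trajectories). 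The correct dichotomy is that a trajectory either converges to a zero or descends with $\ol{f}\to-\infty$ on the (non-compact) cover without converging, and the second alternative cannot be excluded in general --- this failure of the flow to be gradient-like is precisely what distinguishes Novikov theory from Morse theory. Once ends of unstable arcs of index-$1$ points and generic trajectories can escape, both the wall-crossing reading of the cocycle condition and your decomposition of $U$ collapse, so local constancy of $\psi_{c}$ is unsupported. A secondary error: $\Lambda_{-\theta}\neq\Lambda_{\theta}$ --- the two completions of $\bK[G_{\theta}]$ are taken in opposite directions, and likewise the finiteness conditions (B) for $\ol{f}$ and $-\ol{f}$ point opposite ways --- so the asserted term-by-term identification $\cn^{n}(M,\theta)\cong\cn^{0}(M,-\theta)$ requires the twist $g\mapsto g^{-1}$; the duality strategy is sound, but not for the reason you give.

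The repair is to argue algebraically rather than dynamically, which is how this vanishing is usually proved. The Morse--Novikov complex computes the cohomology of $M$ with local coefficients in $\Lambda_{\theta}$, the module structure coming from $\pi_{1}(M)\to G_{\theta}\subset\Lambda_{\theta}^{\times}$; in degree $0$ this is the invariant part, i.e.\ $\bigcap_{g}\ker(1-g)$ acting on $\Lambda_{\theta}$. Since $[\theta]\neq0$ there is $g\in G_{\theta}$ with $I_{\theta}(g)>0$, and then $1-g$ is a unit of $\Lambda_{\theta}$ because the geometric series $\sum_{n\geq0}g^{n}$ satisfies condition (A); hence the degree-$0$ cohomology vanishes, and degree $\dim M$ follows by Poincar\'{e} duality. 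Your final step --- that a nonzero constant cochain violates (B) because the $G_{\theta}$-orbit of a minimum has $\ol{f}$-values unbounded below --- is correct and is morally the same use of $[\theta]\neq0$ as the invertibility of $1-g$, but as written it sits downstream of the false geometric input.
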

\noindent In particular, when $\theta$ is not exact, we have that 
\begin{equation}
\dim_{\Lambda_{\theta}}\hn(M,\theta)<\dim_{\Q}\mrm{H}(M).
\end{equation}

\subsubsection{Floer-Novikov cohomology}\label{sec: FNH_Flux_Conjecture}
Let $(M,\om)$ be a closed symplectic manifold, and $\{\psi_{t}\}$ a symplectic path based at the identity with endpoint $\psi_{1}=\psi$. The Deformation lemma in \cite{Le-Ono} implies that we can suppose, without loss of generality, that there exists a $1$-periodic smooth family of smooth functions $F_{t}\in{C}^{\infty}(M)$, $F_{t+1}=F_{t}$, such that
\begin{equation}\label{eq: deformation_lma}
\iota_{X^{t}}\om=\theta+dF_{t},
\end{equation}
where $X^{t}$ is the time-dependent vector field induced by the symplectic isotopy $\{\psi_{t}\}$ and $\theta$ is a closed $1$-form representing $\wflux(\{\psi_{t}\})$. When $\theta$ is exact Equation (\ref{eq: deformation_lma}) becomes the usual Hamiltonian equation for $H_{t}=F_{t}+f$, where $df=\theta$. Similarly, we have a formal closed $1$-form on the contractible component $\cL M$ of the loop space of $M$. Indeed, for a loop $x\in\cL M$ and $v\in T_{x}\cL M=\Gamma(x^{*}TM)$, we define
\begin{align*}
a_{\{\psi_{t}\}}(v)&=\int^{1}_{0}\om(v(t),x'(t)-X^{t}(x(t)))dt\\
	&=\int^{1}_{0}\om(v(t),x'(t))dt + \int^{1}_{0}(\theta + dF_{t})(v(t))dt.
\end{align*}
The idea is to find a suitable cover on which $a_{\{\psi_{t}\}}$ is exact, and define Floer-Novikov cohomology as an analog of Morse theory for a primitive $\cA_{\psi_{t}}$. Consider the following homomorphisms $\cI_{\theta},\cI_{\om}, \cI_{c_{1}}:\pi_{1}(\cL M)\ra\R$ defined as 
\begin{equation*}
\cI_{\theta}=I_{\theta}\circ \mrm{Ev}_{*},\quad \cI_{\om}(\{x_{s}\})=\int_{C(\{x_{s}\})}\om,\quad\cI_{c_{1}}(\{x_{s}\})=\brat{c_{1}(M),[C(\{x_{s}\})]}
\end{equation*}
for a loop $\{x_s\}_{s\in[0,1]}$ in $\cL M$. Here, $\mrm{Ev}:\cL M\ra M$ is given by evaluation at $t=0$, and $C(\{x_{s}\})$ is the $2$-cycle represented by \[(s,t)\in\rS^{1}\times\rS^{1}\mapsto x_{s}(t)\in M.\] We denote by $\til\cL M\ra\cL M$ the covering space of $\cL M$ corresponding to \[\ker(\cI_{\om}+\cI_{\theta})\cap\ker\cI_{c_{1}}\subset\pi_{1}(\cL M).\] A description of $\til\cL M$ can be given in the following manner. Choose a primitive $\ol{f}$ of $\pi^{*}\theta$ on $\ol{M}^{\theta}$, and consider pairs $(\til{x},u)$ composed of a loop $\til{x}\in\cL\ol{M}^{\theta}$ and a capping $u:\D\ra M$ such that $u|_{\del\D}=\pi\circ\til{x}$. Define the equivalence relation given by $(\til{x},u)\sim(\til{y},w)$ if and only if
 \begin{align*}
 	\pi\circ\til{x}&=\pi\circ\til{y}\\
	\int_{u}\om+\ol{f}(\til{x}(0))&=\int_{w}\om+\ol{f}(\til{y}(0)),
 \end{align*}
 and
 \begin{align*}
 	\brat{c_{1}(M), u\#(-w)}&=0.
 \end{align*}
Here, $u\#(-w)$ corresponds to the sphere obtained by gluing the two disks along their common boundaries with the orientation of $w$ reversed. Each element in $\til\cL M$ corresponds to an equivalence class $[\til{x},u]$ of such a pair. With this description, the covering map $\Pi:\til\cL M\ra\cL M$ is given by $[\til{x}, u]\mapsto \pi\circ\til{x}=x$. A choice of primitive for $\Pi^{*}a_{\{\psi_{t}\}}$ is given by
\begin{equation*}
\cA_{\{\psi_{t}\}}([\til{x},u])= \int^{1}_{0}(\ol{f}+F_{t}\circ\pi)(\til{x}(t))dt + \int_{u}\om,
\end{equation*}
where $F_{t}$ is as in Equation (\ref{eq: deformation_lma}). The critical points $\cP(\{\psi_{t}\})$ of $\cA_{\{\psi_{t}\}}$ are the lifts to $\til\cL M$ of the fixed points $x\in\fix(\psi)$ of $\psi$ such that $[\{\psi_{t}(x)\}]=1$ in $\pi_{1}(M)$. To a critical point $[\til{x},u]$ we assign an index $\cz([\til{x},u])$ given by the Conley-Zehnder index of $x=\pi\circ\til{x}$ with respect to the trivialization $x^{*}TM$, which extends to $u^{*}TM$. Note that the covering transformation group of $\cL M$ is given by
\begin{equation*}
	G_{\theta,\om}= \frac{\pi_{1}(\cL M)}{\ker(\cI_{\om}+\cI_{\theta})\cap\ker\cI_{c_{1}}}.
\end{equation*}
Let $\Lambda_{\theta,\om}$ be Novikov ring given by the completion of the group ring of $G_{\theta,\om}$ with respect to the filtration induced by $\cI_{\om}+\cI_{\theta}$, that is:
\begin{equation*}
	\Lambda_{\theta, \om}=\bigg\{\sum_{i}a_{i}g_{i}\,\Big|\,a_{i}\in\Q,\,g_{i}\in G_{\theta, \om},\,\text{satisfying (A')}\Big\}.
\end{equation*}
\begin{center}
	(A') For each $c\in\R$ the set $\{i\,|\,a_{i}\neq0, (\cI_{\om}+\cI_{\theta})(g_{i})<c\}$ is finite.
\end{center}
\vspace{0.2cm}
Suppose that $\psi$ is non-degenerate and let $J=\{J_{t}\}$ be a family of $\om$-compatible almost complex structures. The Floer-Novikov cochain complex is defined as follows:
\begin{equation*}
	\cfn^{k}(\{\psi_{t}\};J)=\bigg\{\sum_{i}a_{i}[\til{x}_{i},u_{i}]\,\Big|\,a_{i}\in\Q,\,[\til{x}_{i},u_{i}]\in\cP(\{\psi_{t}\}),\,\text{satisfying (B')}\bigg\}
\end{equation*}
\begin{center}
	(B') For each $c\in\R$ the set $\{i\,|\,a_{i}\neq0,\cA_{\{\psi_{t}\}}([\til{x},u])<c\}$ is finite, and, for all $i$, $\cz([\til{x}_{i},u_{i}])=k$.
\end{center}
\vspace{0.2cm}
\noindent The graded $\Q$-vector space $\cfn^{*}(\{\psi_{t}\};J)$ is endowed with the Floer-Novikov coboundary operator $\delta_{\mrm{FN}}$, which is defined as the signed count of isolated solutions (modulo the $\R$-action) of the asymptotic boundary value problem on maps $u:\R\times\rS^{1}\ra M$ defined by the gradient of $\cA_{\{\psi_{t}\}}$ \cite{Le-Ono, Ono_FluxConjecture}. In other words, the coboundary operator counts the finite energy solutions to the Floer equation
\begin{equation*}
	\frac{\del u}{\del s}+J_{t}(u)\bigg(\frac{\del u}{\del t} - X_{t}(u)\bigg) = 0, \qquad \lim_{s\ra\pm\infty}\til{u}(s,t)=\til{x}^{\pm}(t),
\end{equation*}
for some lift $\til{u}:\R\times\rS^{1}\ra M$, such that $\cz([\til{x}^{+},u^{+}])-\cz([\til{x}^{-},u^{-}])=1$, and $[\til{x}^{+}, u^{-}\#u]=[\til{x}^{+},u^{+}]$. The Floer-Novikov cohomology is defined as \[\hfn^{*}(\{\psi_{t}\};J)= H^{*}((\cfn(\{\psi_{t}\};J),\delta_{\mrm{FN}}).\] It can be shown to be independent of the choice of $J$. In addition, it only depends on the flux of the symplecitc path.
\pagebreak
\begin{thm}[{Lê-Ono, \cite[Theorem 4.3]{Le-Ono}}]
Let $\{\psi^{(1)}_{t}\}$ and $\{\psi^{(2)}_{t}\}$ be symplectic paths with non-degenerate endpoints. Suppose that $\{\psi^{(1)}_{t}\circ(\psi^{(2)}_{t})^{-1}\}$ has zero flux.. Then,
\begin{equation*}
	\hfn^{*}(\{\psi^{(1)}_{t}\}; J^{(1)})\cong\hfn^{*}(\{\psi^{(2)}_{t}\}; J^{(2)})
\end{equation*}
as $\Lambda_{\theta,\om}$-modules. Here, $[\theta]$ is the flux of both paths. 
\end{thm}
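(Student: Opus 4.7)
My plan is to adapt the standard Floer-theoretic continuation-map argument to the Floer--Novikov setting, following the framework of \cite{Le-Ono, Ono_FluxConjecture}. Let $[\theta] = \wflux(\{\psi^{(1)}_t\}) = \wflux(\{\psi^{(2)}_t\}) \in \h^{1}(M;\R)$ and fix a closed representative $\theta$. The Deformation Lemma of \cite{Le-Ono} supplies, for each $i \in \{1,2\}$, a $1$-periodic smooth family $F^{(i)}_t \in C^\infty(M)$ with $\iota_{X^{(i)}_t}\om = \theta + dF^{(i)}_t$ for the \emph{same} form $\theta$.

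First I would choose a smooth homotopy $\{(F^{(s)}_t, J^{(s)}_t)\}_{s \in [0,1]}$ of data between the two choices that is constant near $s=0$ and $s=1$, holding $\theta$ fixed throughout the deformation. Let $X^{(s)}_t$ denote the symplectic vector field generated by $\theta + dF^{(s)}_t$. I then study the parametrized Floer equation
\begin{equation*}
\frac{\del u}{\del s} + J^{(s)}_t(u)\Bigl(\frac{\del u}{\del t} - X^{(s)}_t(u)\Bigr) = 0,
\end{equation*}
for $u:\R \times \rS^{1} \to M$ with asymptotic limits contractible fixed points of $\psi^{(1)}_1$ at $s \to -\infty$ and $\psi^{(2)}_1$ at $s \to +\infty$. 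Because $\theta$ and $\om$ are common to both paths, the covering group $G_{\theta,\om}$ and Novikov ring $\Lambda_{\theta,\om}$ coincide on both sides. A continuation solution $u$ pairs lifts $[\til{x}^-, u^-] \in \cP(\{\psi^{(1)}_t\})$ and $[\til{x}^+, u^+] \in \cP(\{\psi^{(2)}_t\})$ via the rule $[\til{x}^+, u^- \# u] = [\til{x}^+, u^+]$; I would then define the continuation map
\begin{equation*}
\Phi : \cfn^{*}(\{\psi^{(1)}_t\}; J^{(1)}) \to \cfn^{*}(\{\psi^{(2)}_t\}; J^{(2)})
\end{equation*}
by signed counts of isolated continuation solutions with matching Conley--Zehnder index. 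Standard transversality perturbations (carried out as in \cite{Le-Ono}) make this definition rigorous; the chain-map property follows from analyzing broken trajectories on the boundary of one-parameter moduli spaces, and $\Lambda_{\theta,\om}$-linearity is automatic because deck transformations of the common cover commute with the continuation PDE and the energy filtration.

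The main obstacle is confirming that $\Phi$ is well-defined on the completed cochain complex, i.e. that the coefficients of $\Phi([\til{x}^-, u^-])$ satisfy the Novikov finiteness condition (B'). The governing energy identity for the parametrized equation reads
\begin{equation*}
E(u) = \cA_{\{\psi^{(1)}_t\}}([\til{x}^-, u^-]) - \cA_{\{\psi^{(2)}_t\}}([\til{x}^+, u^-\#u]) + \int_{\R\times\rS^{1}}\frac{\del F^{(s)}_t}{\del s}(u(s,t))\,ds\,dt.
\end{equation*}
The crucial point is that the $\theta$-piece of the action functional contributes \emph{nothing} to the $s$-derivative, precisely because $\theta$ was held fixed along the homotopy; the residual error is bounded by the sup-norm of $\del_s F^{(s)}_t$, which is compactly supported in $s$ on the compact manifold $M$. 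Gromov compactness then bounds the number of solutions below any action threshold, yielding (B'). Finally, reversing the homotopy produces a candidate $\Psi$ in the opposite direction, and the usual two-parameter homotopy-of-homotopies argument furnishes chain homotopies $\Psi \circ \Phi \simeq \id$ and $\Phi \circ \Psi \simeq \id$, establishing the desired isomorphism of $\Lambda_{\theta,\om}$-modules.
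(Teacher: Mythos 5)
This statement is quoted from L\^e--Ono (\cite[Theorem 4.3]{Le-Ono}) and the paper gives no proof of it, so there is nothing internal to compare against; your continuation-map argument is, however, exactly the standard proof from that reference. You correctly identify the one point where the Novikov completion could cause trouble --- the well-definedness of $\Phi$ under condition (B') --- and the resolution you give (the error term in the energy identity involves only $\del_s F^{(s)}_t$, not $\theta$, hence is uniformly bounded, after which Gromov compactness applies) is precisely why the construction works when the flux is held fixed and fails when it is not, which is what forces the variant theory of Section \ref{sec: FNH_Variant}. The only caveat worth recording is that transversality for these moduli spaces is not ``standard'' in full generality and requires the monotonicity-type hypotheses under which L\^e--Ono work, but that is a background assumption of the cited theorem rather than a gap in your outline.
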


\begin{remark}\label{rmk: FN_to_F}	
When $\{\varphi_{H}^{t}\}$ is a Hamiltonian isotopy, $\ol{M}^{\theta}=M$ and $(\cfn^{*}(\{\psi\};J),\delta)$ reduces to the usual Floer cochain complex $(\cf(H;J),\delta)$ associated with the Hamiltonian function $H$.  Therefore, the PSS isomorphism yields 
\begin{equation}
\hfn^{*}(\{\varphi_{H}^{t}\};J)=\hf^{*}(H;J)\cong\mrm{H}^{*+n}(M;\Q)\otimes_{\Q}\Lambda_{\om}.
\end{equation}
\end{remark}
\noindent The main reason for the choice of Novikov ring in this construction is so that we have the following two statements.
\begin{thm}[{Ono, \cite[Theorem 4.10]{Ono_FluxConjecture}}]\label{thm: FNH_of_loop}
Let $\{\psi^{(1)}_{t}\}$ and $\{\psi^{(2)}_{t}\}$ be symplectic paths with non-degenerate endpoints $\psi^{(1)}_{1}=\psi^{(2)}_{1}=\psi$. Suppose that the symplectic loop $\{\psi^{(1)}_{t}\circ(\psi^{(2)}_{t})^{-1}\}$ has trivial evaluation, i.e. $ev(\{\psi^{(1)}_{t}\circ(\psi^{(2)}_{t})^{-1}\})=1$.  Then, we have a ring isomorphism $\Psi:\Lambda_{\theta_{1},\om}\xrightarrow{\cong}\Lambda_{\theta_{2},\om}$, and 
\begin{equation*}
	\hfn^{*}(\{\psi^{(1)}_{t}\}; J^{(1)})\cong\hfn^{*}(\{\psi^{(2)}_{t}\}; J^{(2)})
\end{equation*}
as $\Lambda_{\theta_{1},\om}$-modules, where $[\theta_{i}]$ corresponds to the flux of $\{\psi_{t}^{(i)}\}$ for $i=1,2$. The module action of $\Lambda_{\theta_{1},\om}$ on $\hfn^{*}(\{\psi^{(2)}_{t}\}, J^{(2)})$ is the one induced by $\Psi$. 
\end{thm}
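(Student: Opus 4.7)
The plan is to use the symplectic loop $\{\phi_t\}:=\{\psi^{(1)}_t\circ(\psi^{(2)}_t)^{-1}\}$ as the organizing object. Its flux is $[\theta_1]-[\theta_2]$, which need not vanish, so the Lê--Ono isomorphism (\cite[Theorem 4.3]{Le-Ono}) does not apply directly; the hypothesis $ev(\{\phi_t\})=1$ is precisely what bridges this gap.

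The first step is to transport data by $\{\phi_t\}$. Define $\Phi:\cL M\to\cL M$ by $\Phi(x)(t):=\phi_t(x(t))$. The trivial evaluation of $\{\phi_t\}$ guarantees that the orbit of the base point is contractible, so $\Phi$ preserves the contractible component of the loop space. Since $\psi_1^{(1)}=\psi_1^{(2)}=\psi$, the map $\Phi$ establishes a bijection between contractible orbits of $\{\psi^{(2)}_t\}$ at fixed points of $\psi$ and the analogous contractible orbits of $\{\psi^{(1)}_t\}$ via the identity $\psi^{(1)}_t(x_0)=\phi_t(\psi^{(2)}_t(x_0))$.

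Next, the contractible cap $v_{\mrm{pt}}$ of the base-point orbit of $\phi_t$ (which exists precisely because $ev(\{\phi_t\})=1$) allows one to associate to each contractible orbit $x$ a suspension cylinder $C_\phi(x)$ interpolating from $x$ to $\Phi(x)$. Set
\begin{equation*}
\til\Phi([\til{x},u]):=[\til{\Phi(x)},\,u\#C_\phi(x)]
\end{equation*}
as a map between the relevant covers of $\cL M$. A direct computation shows that the action shift $\cA_{\{\psi^{(1)}_t\}}\circ\til\Phi-\cA_{\{\psi^{(2)}_t\}}$ splits into the $\om$-integral over $C_\phi(x)$ together with the change of primitive $\ol f_1-\ol f_2$, and that this is controlled by the filtration $\cI_\om+\cI_\theta$ on each class. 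Hence the induced automorphism of the relevant quotients of $\pi_1(\cL M)$ extends to a ring isomorphism $\Psi:\Lambda_{\theta_1,\om}\to\Lambda_{\theta_2,\om}$. The Conley--Zehnder index is preserved because the capping extension via $C_\phi$ provides a canonical trivialization of $TM$ along the new orbit.

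Finally, to promote $\til\Phi$ to a chain isomorphism, choose $J^{(2)}=J$ and $J^{(1)}:=(\phi_t)_*J$; the latter is $\om$-compatible since $\phi_t\in\Symp(M,\om)$. Then the change of variables $u(s,t)\mapsto\phi_t(u(s,t))$ sends finite-energy Floer strips for $(\{\psi^{(2)}_t\},J)$ bijectively to finite-energy Floer strips for $(\{\psi^{(1)}_t\},(\phi_t)_*J)$, preserving energy and Fredholm index. Combined with the invariance of $\hfn^*$ under the choice of $\om$-compatible almost complex structure, this yields the desired isomorphism of $\Lambda_{\theta_1,\om}$-modules. The main obstacle is the bookkeeping in the third step: one must verify that the suspension construction produces a genuine ring isomorphism of Novikov rings, and that the two module structures are intertwined via $\Psi$. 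The topological crux is the hypothesis $ev(\{\phi_t\})=1$, without which the suspension cylinder would not close up to define the lift $\til\Phi$.
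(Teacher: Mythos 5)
The paper does not prove this statement: it is quoted verbatim as Ono's Theorem~4.10 from \cite{Ono_FluxConjecture} and used as a black box, so there is no internal proof to compare against. Your strategy --- conjugating by the loop $\{\phi_t\}=\{\psi^{(1)}_t\circ(\psi^{(2)}_t)^{-1}\}$, transporting cappings via a contraction of the evaluation orbit, and transporting Floer trajectories by $u\mapsto\phi_t(u)$ with $J^{(1)}=(\phi_t)_*J^{(2)}$ --- is the standard naturality argument and is indeed the right skeleton for Ono's proof. The identification of $ev(\{\phi_t\})=1$ as the hypothesis that lets the suspension cylinder close up is exactly the correct crux.

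That said, two steps you assert are precisely where the content lies, and as written they are gaps. First, the claim that the induced map on $\pi_1(\cL M)$ ``extends to a ring isomorphism $\Psi$'' requires showing that $\Phi_*$ carries $\ker(\cI_\om+\cI_{\theta_2})\cap\ker\cI_{c_1}$ onto $\ker(\cI_\om+\cI_{\theta_1})\cap\ker\cI_{c_1}$ and matches the completions. The $\om$-part follows from a Stokes computation giving $\cI_\om(\Phi_*\al)-\cI_\om(\al)=-\brat{\wflux(\{\phi_t\}),\mrm{Ev}_*\al}=-(\cI_{\theta_1}-\cI_{\theta_2})(\al)$, which combined with $\mrm{Ev}\circ\Phi=\mrm{Ev}$ yields $(\cI_\om+\cI_{\theta_1})\circ\Phi_*=\cI_\om+\cI_{\theta_2}$; this identity is the entire reason the Novikov ring is completed with respect to $\cI_\om+\cI_\theta$ rather than $\cI_\om$, and it should be carried out, not waved at. Second, the preservation of $\ker\cI_{c_1}$ and of the Conley--Zehnder grading is not automatic: $\cI_{c_1}(\Phi_*\al)-\cI_{c_1}(\al)$ is a ``$c_1$-flux'' of the symplectic loop $\{\phi_t\}$ evaluated on $\mrm{Ev}_*\al$, and your appeal to a ``canonical trivialization'' does not address why this vanishes; its vanishing is a genuine theorem (of McDuff type, cf.\ \cite{McDuff_Monotone}, invoked elsewhere in this paper), without which one obtains at best a grading-shifted, twisted isomorphism. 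Finally, you should note that $\til\Phi$ a priori depends on the choice of contraction and of the cap $v_{\mrm{pt}}$; one must fix these once and check that the ambiguity is absorbed by the equivalence relation defining $\til\cL M$.
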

\begin{thm}[{Ono, \cite[Theorem 3.12]{Ono_FluxConjecture}}]\label{thm: FNH_and_MNH}
Let $\{\psi_{t}\}$ be a symplectic path based at identity with sufficiently small flux $[\eta]$. Then,
	\begin{equation*}
		\hfn^{*}(\{\psi_{t}\}; J)\cong\hn^{*+n}(M, \eta)\otimes_{\Lambda_{\eta}}\Lambda_{\eta,\om}.
	\end{equation*}
\end{thm}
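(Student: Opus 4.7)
The plan is to adapt the classical Floer$\to$Morse chain-level quasi-isomorphism of Floer, Salamon--Zehnder and Hofer--Salamon to the Novikov setting, realising the comparison as an equality of complexes once the flux is small enough that all generators and trajectories live in a neighbourhood of a Morse cover.

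First I would normalise the data. By the Deformation lemma of L\^e--Ono used to establish \eqref{eq: deformation_lma}, pick a representative $\eta$ of $\wflux(\{\psi_{t}\})$, chosen so that $\eta$ is a Morse $1$-form (perturb if necessary without changing the cohomology class), and write $\iota_{X^{t}}\om=\eta+dF_{t}$ with $F_{t}$ a $1$-periodic family of smooth functions. The hypothesis ``sufficiently small flux'' will be used to simultaneously shrink $\eta$ and $F_{t}$: after isotopy within $\wflux^{-1}([\eta])$, one may replace $(\eta,F_{t})$ by $(\epsilon\eta,\epsilon F_{t})$ for $\epsilon$ small, since this changes the path only up to a path with zero flux, which by the Deformation lemma does not affect $\hfn^{*}$. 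Fix then a primitive $\ol{f}$ of $\pi^{*}\eta$ on $\ol{M}^{\theta}$.

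Next I would identify the generators. The critical points of $\cA_{\{\psi_{t}\}}$ correspond to $1$-periodic orbits $x$ of $\{\psi_{t}\}$ with contractible projection, lifted to $\til\cL M$. For $\epsilon$ small, a $C^{0}$-compactness argument shows that every such orbit is $C^{1}$-close to a constant loop at a zero of $\eta$, i.e.\ to a critical point of $\ol{f}$; moreover each critical point of $\ol{f}$ gives rise, via the implicit function theorem, to exactly one $1$-periodic orbit of the small autonomous perturbation. Lifting these orbits to $\til\cL M$ one obtains a natural bijection between $\cP(\{\psi_{t}\})$ and $\crit(\ol{f})\times G_{\eta,\om}/G_{\eta}$, which at the level of complexes gives
\[
\cfn^{*}(\{\psi_{t}\};J) \;\cong\; \cn^{*+n}(M,\eta)\otimes_{\Lambda_{\eta}}\Lambda_{\eta,\om}
\]
as graded $\Lambda_{\eta,\om}$-modules. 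The grading shift by $n$ is the standard Conley--Zehnder vs.\ Morse index comparison for $C^{2}$-small autonomous generators, applied here to $\ol{f}+F_{t}\circ\pi$ once $\epsilon$ is small enough that the linearised flow stays in the contractible component of $\Sp(2n,\R)$.

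For the differential, I would adapt the Floer--Hofer--Salamon argument showing that for sufficiently small time-dependent perturbation, every finite-energy solution $u$ of the Floer equation whose asymptotes are near $\crit(\ol{f})$ is $C^{0}$-close to a gradient trajectory of $\ol{f}$ on the cover. The two ingredients are (i) an a priori energy estimate, since the action values are of order $\epsilon$, forcing $\|\partial_{s}u\|_{L^{2}}$ to be small, and (ii) exclusion of bubbling: $J$-holomorphic spheres have $\om$-energy bounded away from zero by the minimal spherical area, so for $\epsilon$ small no bubble can form. Combining (i) and (ii) with the regularity step of Salamon--Zehnder, one upgrades each Floer cylinder to a genuine negative gradient trajectory of $\ol{f}$ on $\ol{M}^{\theta}$, giving an isomorphism of chain complexes $\cfn^{*}(\{\psi_{t}\};J)\cong\cn^{*+n}(M,\eta)\otimes_{\Lambda_{\eta}}\Lambda_{\eta,\om}$. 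Passing to cohomology yields the theorem.

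The hardest step is undoubtedly the differential comparison: one must simultaneously rule out sphere bubbling, prove the quantitative closeness of Floer to Morse cylinders on the non-compact cover $\ol{M}^{\theta}$, and verify that the bijection on trajectories respects the Novikov filtrations on both sides so that the completion $\otimes_{\Lambda_{\eta}}\Lambda_{\eta,\om}$ produces the correct extension of coefficients. The remainder reduces to careful bookkeeping of the covering groups $G_{\eta}\subset G_{\eta,\om}$ and the kernel conditions defining $\til\cL M$.
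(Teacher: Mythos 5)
This theorem is quoted from Ono (\cite[Theorem 3.12]{Ono_FluxConjecture}); the paper contains no proof of it, so I am comparing your sketch against the standard argument of Hofer--Salamon, L\^e--Ono and Ono rather than against anything internal to the paper. Your overall architecture --- identify the generators with the zeros of a Morse representative $\eta$, match Conley--Zehnder and Morse indices up to the shift by $n$, and show that index-one Floer cylinders degenerate to gradient trajectories of $\ol{f}$ on the cover --- is indeed the approach taken there.

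However, your normalization step contains a genuine error. Replacing $(\eta,F_{t})$ by $(\epsilon\eta,\epsilon F_{t})$ does \emph{not} change the path ``only up to a path with zero flux'': the rescaled path has flux $\epsilon[\eta]$, so the difference path has flux $(1-\epsilon)[\eta]\neq 0$, and $\hfn^{*}$ is only invariant under changes of the path within a fixed flux class. If your reduction were legitimate, the theorem would hold for \emph{arbitrary} flux, which is false: Remark \ref{rmk: no_PSS} exhibits symplectic paths (coming from non-Hamiltonian circle actions with fixed points) whose Floer--Novikov cohomology is not isomorphic to $\hn^{*+n}(M,\eta)\otimes_{\Lambda_{\eta}}\Lambda_{\eta,\om}$. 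The smallness of the flux must be used as a hypothesis --- it is what guarantees that $[\eta]$ admits a $C^{1}$-small representative, so that the time-one map has no non-constant contractible-type periodic orbits and the local index comparison applies; it cannot be manufactured by rescaling. A secondary, but still substantive, issue is the differential comparison: your a priori energy bound ``of order $\epsilon$'' holds only for trajectories whose deck/capping component is trivial, whereas the Morse--Novikov differential also counts connecting orbits between deep translates on $\ol{M}^{\eta}$, whose energies are unbounded; excluding $t$-dependent index-one solutions at \emph{all} Novikov energy levels requires the $S^{1}$-symmetry and regularity argument of Hofer--Salamon (or a filtration argument over the Novikov ring), not merely smallness of the action values together with the bubbling threshold. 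These are precisely the points where the actual proof does its real work.
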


\begin{remark}\label{rmk: no_PSS}
The Floer-Novikov cohomology $\hfn^{*}(\{\psi_{t}\}; J)$ of a symplectic path $\{\psi_{t}\}$ is not always isomorphic to the Morse-Novikov cohomology $\hn(M,\theta)$ of its flux $[\theta]$ (this has also been observed in \cite{Seidel_Braids}). This can be seen by studying the examples in \cite{Jang-Tolman, Tolman_NonHamiltonianS1-Action, McDuff_MomentMap} of non-Hamiltonian symplectic circle actions with fixed points. Indeed, in these cases we have symplectic paths with non-trivial flux and trivial evaluation. In particular, if we suppose that such an isomorphism exists, Theorem \ref{thm: FNH_of_loop} together with Theorem \ref{thm: FNH_and_MNH} would imply that $\dim_{\Lambda_{\theta}}\hn(M,\theta) = \dim_{\Q}\h(M;\Q)$, which is in contradiction to Proposition \ref{prop: Morse_v_MorseNovikov}. This also shows that the $e$-homomorphism is not injective in general.
\end{remark}

\subsubsection{A variant of Floer-Novikov cohomology for changing flux}\label{sec: FNH_Variant}
In this section we recall a variant, introduced in \cite{Ono_FNCohomology}, of the Floer-Novikov cohomology construction presented in Section \ref{sec: FNH_Flux_Conjecture}, which allows the comparison between symplectic paths that have different flux.\\

\noindent Let $(M,\om)$ be a closed symplectic manifold. Suppose $\{\psi_{t}\}$ is a symplectic path with endpoint $\psi_{1}=\psi$ and flux $[\theta]$. Let $p:\til{M}\ra M$ be an abelian covering space of $M$ such that $p^{*}\theta$ is exact; $\ol{M}^{\theta}$ is the smallest choice of such a covering space. Let $\{\til{F}_{t}\}$ be a smooth family of smooth functions on $\til{M}$ such that $d\til{F}_{t}=p^{*}\theta$. Just as before, we would like to make a choice of covering space of $\cL M$ on which the pullback of $a_{\{\psi_{t}\}}$ is exact. We denote by $P:\til\cL\til{M}\ra\cL M$ the covering space of $\cL M$ associated with
\begin{equation*}
	\ker\cI_{\om}\cap\ker\cI_{c_1}\cap \mrm{Ev}_{*}^{-1}(p_{*}(\pi_{1}(\til{M})))\subset\pi_{1}(\cL M).
\end{equation*}
This covering can be seen as the space of pairs $(\til{x},u)$, $u|_{\del\D}=\pi\circ\til{x}$, under the following equivalence relation: $(\til{x},u)\sim(\til{y},w)$ if and only if
 \begin{align*}
 	\til{x}&=\til{y}\\
	\int_{u}\om&=\int_{w}\om,
 \end{align*}
 and
 \begin{align*}
 	\brat{c_{1}(M), u\#(-w)}&=0.
 \end{align*}
On $\til\cL M$, the pullback $P^{*}a_{\psi_{t}}$ is exact and a choice of primitive is given by
\begin{equation*}
\til{\cA}_{\{\psi_{t}\}}([\til{x},u])= \int^{1}_{0}\til{F}_{t}(\til{x}(t))dt + \int_{u}\om.
\end{equation*}
Similairly, the critical points $\til\cP(\{\psi_{t}\})$ of $\til\cA_{\{\psi_{t}\}}$ are lifts to $\til\cL\til{M}$ of the fixed points $x\in\fix(\psi)$ of $\psi$ that satisfy $[\{\psi_{t}(x)\}]=1\in\pi_{1}(M)$. Just as in Seciton \ref{sec: FNH_Flux_Conjecture}, to each critical point $[\til{x},u]$ we assign a Conley-Zehnder type index. The covering transformation group of $\til\cL\til{M}$ is given by
\begin{equation*}
	\til{G}_{\theta,\om}= \frac{\pi_{1}(\cL M)}{\ker\cI_{\om}\cap\ker\cI_{c_1}\cap \mrm{Ev}_{*}^{-1}(p_{*}(\pi_{1}(\til{M})))}\,,
\end{equation*}
and, we denote by $\til\Lambda_{\theta,\om}$ the Novikov ring given by the completion of its group ring with respect to the filtration induced by $\cI_{\om}+\cI_{\theta}$, that is:
\begin{equation*}
	\til\Lambda_{\theta, \om}=\bigg\{\sum_{i}a_{i}g_{i}\,\Big|\,a_{i}\in\Q,\,g_{i}\in \til{G}_{\theta, \om},\,\text{satisfying (A')}\Big\}.
\end{equation*}
Suppose that $\psi$ is non-degenerate and let $J=\{J_{t}\}$ be a family of $\om$-compatible almost complex structures. The Floer-Novikov cochain complex is defined as follows:
\begin{equation*}
	\cfn^{k}(\{\psi_{t}\},\til{M};J)=\bigg\{\sum_{i}a_{i}[\til{x}_{i},u_{i}]\,\Big|\,a_{i}\in\Q,\,[\til{x}_{i},u_{i}]\in\til\cP(\{\psi_{t}\}),\,\text{satisfying (B')}\bigg\}.
\end{equation*}
The coboundary operator $\til{\delta}_{FN}$ is defined by the same formula as in Section \ref{sec: FNH_Flux_Conjecture}. The Floer-Novikov homology $\hfn_{*}(\{\psi_{t}\}, \til{M};J)$ associated with the covering space $\til{M}$ is defined as the homology of $(\cfn(\{\psi_{t}\},\til{M};J),\til{\delta}_{FN})$. It is independent of the choice of almost complex structure $J$ and depends only on the flux of the symplecitc path. The following theorem allows the comparison between the ranks of the Floer-Novikov cohomology of symplectic paths with flux lying in the kernel of $p^{*}$. 

\begin{prop}[{Ono, \cite[Proposition 4.8]{Ono_FNCohomology}}]\label{thm: FNH1_to_FNH2}
Let $\{\psi^{(1)}_{t}\}$ and $\{\psi^{(2)}_{t}\}$ be symplectic paths such that $\wflux(\{\psi^{(i)}_{t}\})\in\ker\{p^{*}:\h^{1}(M;\R)\ra\h^{1}(\til{M};\R)\}$ for $i=1,2$. Then,
\begin{equation*}
	\rank_{\til\Lambda_{\theta_{1}, \om}}\hfn^{*}(\{\psi^{(1)}_{t}\},\til{M}; J^{(1)})=\rank_{\til\Lambda_{\theta_{2}, \om}}\hfn^{*}(\{\psi^{(2)}_{t}\},\til{M}; J^{(2)}),
\end{equation*}
where $[\theta_{i}]$ corresponds to the flux of $\{\psi_{t}^{(i)}\}$ for $i=1,2$.
\end{prop}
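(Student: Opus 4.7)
The plan is to construct a continuation-type chain map between the two Floer--Novikov cochain complexes and thereby reduce the rank equality to a standard invariance argument.

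First, I would apply the Deformation Lemma of Lê--Ono to arrange that the two symplectic paths are joined by a smooth $1$-parameter family $\{\psi_t^s\}_{s\in[0,1]}$ of symplectic paths with $\{\psi_t^0\} = \{\psi_t^{(1)}\}$ and $\{\psi_t^1\} = \{\psi_t^{(2)}\}$; this is possible because $\ker p^*$ is a linear subspace of $\h^1(M;\R)$, so the interpolating fluxes $\theta^s$ remain in $\ker p^*$ and admit primitives $\til F_t^s$ on $\til M$ of the form given by equation (\ref{eq: deformation_lma}), varying smoothly in $s$.

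Next, I would define a continuation map
\begin{equation*}
\Phi:\cfn^*(\{\psi_t^{(1)}\},\til M; J^{(1)})\longrightarrow \cfn^*(\{\psi_t^{(2)}\},\til M; J^{(2)})
\end{equation*}
by counting isolated finite-energy solutions $u:\R\times\rS^1\ra M$ to the $s$-dependent Floer equation
\begin{equation*}
\frac{\del u}{\del s} + J_t^s(u)\Big(\frac{\del u}{\del t} - X_t^s(u)\Big) = 0,
\end{equation*}
where $J_t^s$ is a generic $s$-dependent family of $\om$-compatible almost complex structures and $X_t^s$ is the vector field generating $\{\psi_t^s\}$. The crucial observation is that the covering transformation group $\til G_{\theta,\om}$ is independent of the representative $\theta \in \ker p^*$---it depends only on $\om$, $c_1$, and the subgroup $p_*(\pi_1(\til M))$---so $\Phi$ naturally takes values in a complex over the same underlying group ring, and the two Novikov completions $\til\Lambda_{\theta_1,\om}$ and $\til\Lambda_{\theta_2,\om}$ are identified, up to a filtered ring isomorphism $\Psi$ coming from the $s$-integral of the interpolating $1$-form.

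Then I would verify, via the standard continuation gluing and homotopy-of-homotopies argument, that $\Phi$ is a $\Psi$-semi-linear chain map whose composition with the reverse continuation map is chain-homotopic to the identity; this yields an isomorphism of modules after base change along $\Psi$, from which the equality of ranks follows. The main obstacle I expect is the Novikov-type compactness of the moduli spaces defining $\Phi$: one must show that, for each action window, only finitely many homotopy classes of continuation trajectories contribute to each matrix coefficient. This is precisely where the hypothesis $p^*\theta_i = 0$ is essential, as it ensures that the additional action variation along the continuation, viewed on the cover $\til M$, is bounded by the $\om$-energy plus a term that vanishes on loops in $p_*(\pi_1(\til M))$, yielding the required control on the filtration.
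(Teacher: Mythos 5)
First, a point of reference: the paper does not prove this proposition at all --- it is quoted verbatim from Ono's work --- so there is no internal argument to compare yours against. Your sketch follows the natural continuation-map strategy, and continuation maps are indeed the geometric core of Ono's argument; you also correctly observe that $\til{G}_{\theta,\om}$ does not depend on $\theta$, and you correctly locate the role of the hypothesis $[\theta_{i}]\in\ker p^{*}$: it forces $\cI_{\theta_{i}}=I_{\theta_{i}}\circ\mrm{Ev}_{*}$ to vanish on $\mrm{Ev}_{*}^{-1}(p_{*}\pi_{1}(\til{M}))$, hence to descend to the deck group, which is what makes the filtrations well defined and the energy estimates controllable.

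There is, however, a genuine gap at the central algebraic step. You assert that $\til\Lambda_{\theta_{1},\om}$ and $\til\Lambda_{\theta_{2},\om}$ are ``identified, up to a filtered ring isomorphism $\Psi$ coming from the $s$-integral of the interpolating $1$-form,'' and that the continuation map is $\Psi$-semi-linear. These two rings are completions of the \emph{same} group ring $\Q[\til{G}_{\theta,\om}]$ with respect to \emph{different} valuations, $\cI_{\om}+\cI_{\theta_{1}}$ versus $\cI_{\om}+\cI_{\theta_{2}}$; since $\cI_{\theta_{1}}\neq\cI_{\theta_{2}}$ on $\til{G}_{\theta,\om}$ whenever $[\theta_{1}]\neq[\theta_{2}]$, they are genuinely different subrings of the full formal completion, and the $s$-integral of the interpolating form shifts the \emph{action functional}, not the group ring --- it does not produce a ring isomorphism. (Compare Theorem \ref{thm: FNH_of_loop}, where the isomorphism $\Psi$ exists for a specific geometric reason, namely the trivial evaluation of a connecting loop; nothing analogous is available here.) Without such a $\Psi$ there is nothing to base-change along, and the standard ``composition of continuations is chain homotopic to the identity'' argument does not close up either: the composed map has the form ``identity plus strictly action-decreasing terms,'' and inverting such a map requires completing with respect to one specific filtration, which is exactly where the two sides disagree. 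This is why the proposition asserts only an equality of \emph{ranks} rather than an isomorphism of modules; your sketch, taken literally, claims a stronger statement that does not even typecheck without a ring homomorphism between the two coefficient rings. Establishing the rank equality requires the finer algebraic comparison carried out in Ono's paper, in combination with the Novikov-finiteness estimates you correctly flag as the analytic obstacle.
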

\begin{remark}\label{rmk: HFN_variants}
In general, $\hfn_{*}(\{\psi_{t}\}, \til{M};J)$ is different from $\hfn_{*}(\{\psi_{t}\} ;J)$ even when $\til{M}=\ol{M}^{\theta}$. Nonetheless, whenever $\ker\cI_{c_{1}}\subset\ker\cI_{\om}$ we have that \[\hfn_{*}(\{\psi_{t}\}, \ol{M}^{\theta};J)=\hfn_{*}(\{\psi_{t}\} ;J).\] This holds for $(M,\om)$ spherically (positive or negative) monotone or symplectically aspherical. We refer to \cite{Le-Ono_Ukraine} for more details on the relationship between these cohomology theories. 
\end{remark}

\section{Proof of Theorem \ref{thm: main}}\label{sec: proof_of_main_thm}
\subsection{Proof using Floer-Novikov theory}
When $(M,\om)$ is either symplectically aspherical or satisfies the weak Lefschetz property, we have presented proofs based on classical arguments in Section \ref{sec: e-homomorphism}. We shall, therefore, consider the spherically monotone case. Let $\{\psi_{t}\}$ be a symplectic loop with \[\wflux(\{\psi_{t}\})=[\eta]\in\Gamma.\] Observe that spherical monotonicity implies that  
\begin{equation*}\label{eq: equality_of_rings}
	\ker(\cI_{\om}+\cI_{\eta})\cap\ker\cI_{c_{1}} = \ker\cI_{\om}\cap\ker\cI_{\eta}\cap\ker\cI_{c_{1}}.
\end{equation*}
Set $\til{M}=\ol{M}^{\eta}$ in the construction of the variant of Floer-Novikov homology defined in Section \ref{sec: FNH_Variant}. In this case, note that $\mrm{Ev}_{*}^{-1}(p_{*}(\pi_{1}(\ol{M}^{\eta}))=\ker\cI_{\eta}$. Indeed, this is a consequence of $\pi_{1}(\ol{M}^{\eta})=\ker I_{\eta}$, which follows from the defining property of $\ol{M}^{\eta}$, and of the equality $\cI_{\eta}=I_{\eta}\circ \mrm{Ev}_{*}$. These observations allow to deduce that for any symplectic path $\{\xi_{t}\}$ with flux $[\eta]$, we have $\til{\Lambda}_{\eta, \om}={\Lambda}_{\eta, \om}$, and
\begin{equation}\label{eq: equality_of_complexes}
	(\cfn^{*}(\{\xi_{t}\},\ol{M}^{\eta};J), \til\delta_{FN})=(\cfn^{*}(\{\xi_{t}\};J), \delta_{FN})
\end{equation}
by definition (we reiterate that monotonicity is used in an important way, see Remark \ref{rmk: HFN_variants}). Now, further suppose that $[\{\psi_{t}\}]\in\ker ev$. Let $\{\varphi_{t}\}$ be a Hamiltonian path with endpoint $\varphi$, which is generated by a non-degenerate Hamiltonian $H$, and set $\psi'_{t}=\psi_{t}\circ\varphi_{t}$. Then, $\{\psi'_{t}\}$ and $\{\varphi_{t}\}$ are two symplectic paths with non-degenerate endpoints $\psi'=\varphi$. Since $ev(\{\psi_{t}\}) = 1$, Theorem \ref{thm: FNH_of_loop} implies that $\Lambda_{\eta,\om}\cong\Lambda_{\om}$ and that
\begin{equation}\label{eq: main1}
		\hfn^{*}(\{\psi'_{t}\}, J')\cong\hfn^{*}(\{\varphi_{t}\}; J)
\end{equation}
as $\Lambda_{\eta,\om}$-modules. Since $\varphi_{t}$ is Hamiltonian, we have that
\begin{equation}\label{eq: main2}
	\hfn^{*}(\{\varphi_{t}\}; J)\cong\hf^{*}(H; J)\cong \h^{*}(M;\Q)\otimes_{\Q}\Lambda_{\om}.
\end{equation}
For $\varepsilon>0$, let $\{\psi^{\varepsilon\eta}_{t}\}_{t\in[0,1]}$ be the symplectic path induced by the symplectic vector field $X_{\varepsilon\eta}$ defined by $\iota_{X_{\varepsilon\eta}}\om=\varepsilon\eta$. Then, for all $\varepsilon>0$ we have that
\begin{equation*}
	\wflux(\{\psi^{\varepsilon\eta}_{t}\}) = \varepsilon[\eta] \in \ker\{\pi^{*}: \h^{1}(M;\R) \ra \h^{1}(\ol{M}^{\eta};\R)\}.
\end{equation*}
Therefore, Theorem \ref{thm: FNH1_to_FNH2} implies that 
\begin{equation}\label{eq: rank_comparison1}
	\rank_{\til\Lambda_{\eta, \om}}\hfn^{*}(\{\psi'_{t}\},\ol{M}^{\eta}; J)=\rank_{\til\Lambda_{\varepsilon\eta, \om}}\hfn^{*}(\{\psi^{\varepsilon\eta}_{t}\},\ol{M}^{\eta}; J).
\end{equation}
Finally, Equations (\ref{eq: equality_of_complexes})-(\ref{eq: rank_comparison1}), Theorem \ref{thm: FNH_and_MNH}, and Proposition \ref{prop: Le-Ono_C.2}, respectfully, imply that for $\varepsilon>0$ sufficiently small, 
\begin{align*}
	\rank_{\Lambda_{\om}}\h^{*}(M;\Q)\otimes_{\Q}\Lambda_{\om} &= \rank_{\Lambda_{\om}}\hfn^{*}(\{\varphi_{t}\}; J)\\
		&= \rank_{\Lambda_{\eta, \om}}\hfn^{*}(\{\psi'_{t}\}, J')\\
		&= \rank_{\til\Lambda_{\eta, \om}}\hfn^{*}(\{\psi'_{t}\},\ol{M}^{\eta}; J)\\
		&=\rank_{\til\Lambda_{\varepsilon\eta, \om}}\hfn^{*}(\{\psi^{\varepsilon\eta}_{t}\},\ol{M}^{\eta}; J)\\
		&=\rank_{\Lambda_{\varepsilon\eta, \om}}\hfn^{*}(\{\psi^{\varepsilon\eta}_{t}\}; J)\\
		&=\rank_{\Lambda_{\varepsilon\eta,\om}}\hn^{*}(M, \varepsilon\eta)\otimes_{\Lambda_{\varepsilon\eta}}\Lambda_{\varepsilon\eta,\om}\\
		&=\rank_{\Lambda_{\eta,\om}}\hn^{*}(M,\eta)\otimes_{\Lambda_{\eta}}\Lambda_{\eta,\om}.
\end{align*}
Hence, $\eta$ must be exact. Indeed, if $[\eta]\neq0$, then by Proposition \ref{prop: Morse_v_MorseNovikov}
\begin{align*}
	\rank_{\Lambda_{\om}}\h^{*}(M;\Q)\otimes_{\Q}\Lambda_{\om} &=  \dim_{\Q}\h^{*}(M;\Q)\\
		&> \dim_{\Lambda_{\eta}}\hn^{*+n}(M,\eta)\\
		&= \rank_{\Lambda_{\eta,\om}}\hn^{*+n}(M,\eta)\otimes_{\Lambda_{\eta}}\Lambda_{\eta,\om},
\end{align*}
in contradiction with the above equalities. This concludes the proof of Theorem \ref{thm: main}.
\subsection{A proof using a result of McDuff}
It follows from \cite[Theorem 1]{McDuff_Monotone} that if $\{\psi_{t}\}$ is a symplectic loop, then $\cI_{c_{1}}$ vanishes on the elements of $\pi_{1}(\cL M)$ that are represented by $\{\psi_{t}(\ga(s))\}_{s,t\in[0,1]}$ for a loop $\gamma\in\cL M$. If $\{\psi_{t}\}$ has trivial evaluation, then $\{\psi_{t}(\ga(s))\}_{s,t\in[0,1]}$ determines a homotopy class $A_{\ga}\in\pi_{2}(M)$ with $\brat{c_{1}(M),A_{\ga}}=0$. If $(M,\om)$ is spherically monotone, for every $1$-cycle in $M$ given by a loop $\ga$ we have that
\begin{equation*}
	\brat{\wflux(\{\psi_{t}\}), [\ga]} = \brat{[\om], A_{\ga}}=\la\brat{c_{1}(M),A_{\ga}}=0.
\end{equation*}
Therefore, $\wflux(\{\psi_{t}\})=0$, which yields once again the conclusion of Theorem \ref{thm: main}. In addition, note that if $\{\psi_{t}\}$ has trivial evaluation and flux $[\eta]\neq0$ we can produce a homotopy class $\alpha\in\pi_{1}(\cL M)$ such that $\al\in\ker(\cI_{\om}+\cI_{\eta})\cap\ker\cI_{c_{1}}$ while $\cI_{\eta}(\al)=-\cI_{\om}(\al)\neq0$. Indeed the class represented by the loop-of-loops $\al(s,t)=\psi_{t}(\ga(s))$ satisfies these properties. We are then able to conclude the following:

\begin{cor}
If the $e$-homomorphism is not injective, then \[\ker\cI_{\om}\cap\ker\cI_{\eta}\cap\ker\cI_{c_{1}}\subsetneq\ker(\cI_{\om}+\cI_{\eta})\cap\ker\cI_{c_{1}}\] for all $[\eta]\in\ker e$.
\end{cor}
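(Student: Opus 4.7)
The forward inclusion $\ker\cI_{\om}\cap\ker\cI_{\eta}\cap\ker\cI_{c_{1}}\subseteq\ker(\cI_{\om}+\cI_{\eta})\cap\ker\cI_{c_{1}}$ is tautological, so the content of the corollary is the strictness: under the assumption that $e$ is not injective, we must exhibit, for every $[\eta]\in\ker e$, an explicit element of $\ker(\cI_{\om}+\cI_{\eta})\cap\ker\cI_{c_{1}}$ that fails to lie in $\ker\cI_{\om}\cap\ker\cI_{\eta}$. The preceding paragraph essentially tells us what this element should be, so the plan is to turn that remark into an argument.

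The plan is to proceed as follows. First, fix $[\eta]\in\ker e$. If $[\eta]=0$ the strict inclusion is vacuous, so assume $[\eta]\neq 0$; since $e$ is not injective and $\wflux$ is surjective onto $\Gamma$, we may then choose a symplectic loop $\{\psi_{t}\}$ with $\wflux(\{\psi_{t}\})=[\eta]$ and $ev(\{\psi_{t}\})=1$. Pick a $1$-cycle $\gamma$ in $M$ with $\int_{\gamma}\eta\neq 0$, which exists since $[\eta]\neq 0$, and consider the loop-of-loops
\begin{equation*}
    \alpha:\rS^{1}\to\cL M,\qquad \alpha(s)(t)=\psi_{t}(\gamma(s)).
\end{equation*}
Because $\{\psi_{t}\}$ has trivial evaluation, the loop $t\mapsto\psi_{t}(\gamma(0))$ is contractible in $M$, so $\alpha$ indeed defines a class in $\pi_{1}(\cL M)$ (based at a contractible loop).

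I would then compute the three characteristic homomorphisms on $\alpha$. The evaluation $\mrm{Ev}\circ\alpha$ is the $1$-cycle $\gamma$ itself, so $\cI_{\eta}(\alpha)=I_{\eta}([\gamma])=\int_{\gamma}\eta\neq 0$. The cycle $C(\alpha)$ is the torus parametrized by $(s,t)\mapsto\psi_{t}(\gamma(s))$, so by the flux evaluation formula (\ref{eq: Flux_evaluation}) we get $\cI_{\om}(\alpha)=\int_{C(\alpha)}\om=\pm\brat{\wflux(\{\psi_{t}\}),[\gamma]}=\pm\int_{\gamma}\eta$; a careful orientation check (comparing the $(s,t)$-parametrization used to define $C(\alpha)$ with the $(t,s)$-parametrization in (\ref{eq: Flux_evaluation})) produces the minus sign, giving $\cI_{\om}(\alpha)=-\cI_{\eta}(\alpha)\neq 0$. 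Finally, the result of McDuff invoked in the previous subsection yields $\cI_{c_{1}}(\alpha)=0$, exactly as noted by the author.

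Putting these three computations together, $\alpha$ lies in $\ker(\cI_{\om}+\cI_{\eta})\cap\ker\cI_{c_{1}}$ (by the cancellation $\cI_{\om}(\alpha)+\cI_{\eta}(\alpha)=0$ and McDuff's vanishing), but $\cI_{\om}(\alpha)\neq 0$ and $\cI_{\eta}(\alpha)\neq 0$, so $\alpha$ fails both of the membership conditions for $\ker\cI_{\om}\cap\ker\cI_{\eta}$. The only real difficulty in the argument is the orientation bookkeeping that produces the crucial sign $\cI_{\eta}(\alpha)=-\cI_{\om}(\alpha)$ rather than $+\cI_{\om}(\alpha)$; everything else is a direct unpacking of definitions combined with the previously established McDuff vanishing. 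If the signs miraculously disagreed, one could still repair the proof by concatenating $\alpha$ with an element coming from $\pi_{1}(M)$ viewed inside $\pi_{1}(\cL M)$ via constant loops, so the strict inclusion is robust.
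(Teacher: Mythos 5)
Your argument is correct and is essentially the paper's own: the corollary is deduced from the immediately preceding remark, which exhibits the same loop-of-loops $\al(s,t)=\psi_{t}(\ga(s))$ for a symplectic loop with trivial evaluation and flux $[\eta]$, with $\cI_{c_{1}}(\al)=0$ by McDuff's theorem, $\cI_{\eta}(\al)=\int_{\ga}\eta\neq0$, and $\cI_{\om}(\al)=-\cI_{\eta}(\al)$ by the orientation comparison you describe. (One quibble: when $[\eta]=0$ the strict inclusion is false rather than ``vacuous,'' so the statement must be read for nonzero elements of $\ker e$ --- a sloppiness present in the paper's formulation as well.)
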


\noindent This shows, in hindsight, why the equality of Novikov rings in the spherically monotone setting yielded a proof of injectivity of the $e$-homomorphism.

\section{Acknowledgements}
\noindent This work is part of the author's Ph.D. thesis carried out at the Université de Montréal under the supervision of Egor Shelukhin, who we thank for the inspiring interactions and countless suggestions. We thank Dustin Connery-Grigg for the suggestion to re-examine McDuff's result \cite{McDuff_Monotone}, which yielded an alternative proof of the main theorem. We also thank Deeparaj Bhat for insightful conversations concerning the Appendix. This research was partially supported by Fondation Courtois. This material is based upon work supported by the National Science Foundation under Grant No. DMS-1928930, while the author was in residence at the Simons Laufer Mathematical Sciences Institute (previously known as MSRI) Berkeley, California during the Fall 2022 semester.
	
\section{Appendix}
\noindent In the process of studying fixed points of symplectic torsion we proved the following proposition, which we believe was known to experts but decided to put it in writing for the sake of completeness.

\begin{prop}\label{prop: appendix}
Let $M$ be a closed smooth manifold and $\varphi$ a non-trivial orientation preserving diffeomorphism of finite order. If $\#\fix(\varphi)<\infty$, then
	\begin{equation*}
		\#\fix(\varphi) = \sum_{k\geq0}(-1)^{k}\mrm{tr}\big(\varphi_{*}|_{\h_{k}(M;\Q)}\big).
	\end{equation*}
In particular, if $\varphi$ acts trivially on homology, we have 
	\begin{equation*}
		\#\fix(\varphi) = \chi(M).	
	\end{equation*}
 \end{prop}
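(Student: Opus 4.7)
The right-hand side is, by definition, the Lefschetz number $L(\varphi)$. The strategy is to apply the Lefschetz fixed point theorem and show that every fixed point of $\varphi$ contributes $+1$ to the local index sum, so that $L(\varphi)$ equals the cardinality of $\fix(\varphi)$. This reduces the statement to two classical ingredients: a smooth linearization of $\varphi$ near each fixed point, and a positivity computation for the determinant $\det(I - d\varphi_x)$.

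First, I would average an arbitrary Riemannian metric on $M$ over the cyclic group $\langle\varphi\rangle$ generated by $\varphi$ (which is finite since $\varphi^d=\id$) to obtain a metric for which $\varphi$ is an isometry. A standard fact about isometries fixing a point $x$ is that they commute with the exponential map, so $\varphi = \exp_x \circ A \circ \exp_x^{-1}$ on a neighborhood of $x$, where $A = d\varphi_x \in O(T_xM)$. Because $x$ is an isolated fixed point and $\varphi$ is conjugate to its linearization on a neighborhood, $A$ has no nonzero fixed vector; equivalently, $I - A$ is invertible, so $x$ is a non-degenerate fixed point with local index $\sign\det(I - A)$.

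Next, I would compute that this sign is always $+1$. Since $\varphi$ is orientation preserving, $A \in SO(T_xM)$, and since $A$ has finite order it is diagonalizable over $\mathbb{C}$ with all eigenvalues on the unit circle. The non-real eigenvalues come in complex conjugate pairs $(\lambda,\bar\lambda)$, contributing $(1-\lambda)(1-\bar\lambda) = 2(1-\mrm{Re}\,\lambda) > 0$ to $\det(I-A)$ (strict inequality because $1$ is not an eigenvalue). The real eigenvalues can only be $-1$, and they must occur with even multiplicity since $\det A = +1$; each such pair contributes $(1-(-1))^2 = 4 > 0$. Hence $\det(I-A) > 0$ and the local index at $x$ equals $+1$.

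Combining these, the classical Lefschetz fixed point theorem gives
\[
\sum_{k\geq 0}(-1)^k\trace\!\bigl(\varphi_*|_{\h_k(M;\Q)}\bigr) = L(\varphi) = \sum_{x\in\fix(\varphi)} 1 = \#\fix(\varphi),
\]
which is the first identity. The in-particular statement is immediate: if $\varphi$ acts as the identity on $\h_*(M;\Q)$, each trace equals $\dim_\Q \h_k(M;\Q)$, and the alternating sum is $\chi(M)$. The only subtle step is the linearization via averaged metric and exponential map; once this is in hand, the index computation is a short finite-dimensional linear algebra exercise, and the orientation-preserving hypothesis enters precisely to force $\det A = +1$.
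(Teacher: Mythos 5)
Your proposal is correct and follows essentially the same route as the paper: linearize $\varphi$ at each fixed point (you do this explicitly via an averaged invariant metric and the exponential map, the paper via the standard structure of fixed-point sets of finite-order diffeomorphisms, which rests on the same averaging), observe that non-degeneracy forces the eigenvalues of $d\varphi_x$ to be roots of unity different from $1$ occurring in conjugate pairs, conclude that the local Lefschetz index is $+1$, and apply the Lefschetz--Hopf theorem. The only cosmetic difference is that the paper computes $\det(A_x-\id)$ as a product of $|\lambda_j-1|^2$ while you track the $-1$ eigenvalues separately; both give the same positivity.
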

\begin{proof}
The fact that $\varphi$ is a diffeomorphism of finite order implies, in general, that $\fix(\varphi)$ is given by a disjoint union of isolated smooth submanifolds $F_{1},\cdots, F_{b}$ satisfying $\mrm{det}(D(\varphi)_{x}-\id_{T_{x}M}) = T_{x}F_{j}$ for all $x\in F_{j}$, $1\leq j\leq b$. Since there are only finitely many fixed points by assumption, we find that $\varphi$ is non-degenerate. Furthermore, for each $x$, let $A_{x}\in\mrm{GL}(n,\C)$ correspond to the Jordan form of $D(\varphi)_{x}$. It is a diagonal matrix since $A_{x}$ is of finite order. The fact $\varphi$ is orientation preserving and non-degenerate implies that the diagonal of $A_{x}$ consists of conjugate pairs of roots of unity $\lambda_{i}$ (possibly including pairs of $-1$). Hence,
\begin{equation*}
	\mrm{det}(A_{x}-\id) = \prod_{j=1}^{k} (\lambda_{j}-1)(\ol\lambda_{j}-1) = \prod_{j=1}^{k} |\lambda_{j}-1|^{2}>0.
\end{equation*}
Since $\varphi$ is a non-degenerate diffeomorphism with only finitely many fixed points, the Lefschetz-Hopf theorem implies that
	\begin{equation*}
		\#\fix(\varphi)=\sum_{x\in\fix(\varphi)}\sign(\mrm{det}(D(\varphi)_{x}-\id_{T_{x}M}))= \sum_{k\geq0}(-1)^{k}\mrm{tr}\big(\varphi_{*}|_{\h_{k}(M;\Q)}\big).	
	\end{equation*}
\end{proof}

\bibliographystyle{amsplain}
\bibliography{bibliographySCA}

\end{document}